\DeclareFontFamily{U}{mathx}{\hyphenchar\font45}
\DeclareFontShape{U}{mathx}{m}{n}{ <5> <6> <7> <8> <9> <10>
   <10.95> <12> <14.4> <17.28> <20.74> <24.88> mathx10 }{}
\DeclareSymbolFont{mathx}{U}{mathx}{m}{n}
\DeclareMathAccent{\widecheck}{0}{mathx}{"71}
\theoremstyle{plain}
\newtheorem{theorem}{Theorem}[section]
\newtheorem{prop}[theorem]{Proposition}
\newtheorem{lemma}[theorem]{Lemma}
\newtheorem{coro}[theorem]{Corollary}
\newtheorem{fact}[theorem]{Fact}
\theoremstyle{definition}
\newtheorem{definition}[theorem]{Definition}
\newtheorem{example}[theorem]{Example}
\newtheorem{remark}[theorem]{Remark}
\newcommand{\ts}{\hspace{0.5pt}}
\newcommand{\nts}{\hspace{-0.5pt}}
\newcommand{\CC}{\mathbb{C}\ts}
\newcommand{\RR}{\mathbb{R}\ts}
\newcommand{\QQ}{{\ts \mathbb{Q}}}
\newcommand{\NN}{\mathbb{N}}
\newcommand{\cS}{\mathcal{S}}
\newcommand{\vL}{\varLambda}
\newcommand{\ii}{\mathrm{i}\ts}
\newcommand{\ee}{\mathrm{e}}
\newcommand{\dd}{\, \mathrm{d}}
\newcommand{\exend}{\hfill $\Diamond$}
\newcommand{\defeq}{\mathrel{\mathop:}=}
\newcommand{\fn}[1]{{\left\vert\kern-0.25ex\left\vert\kern-0.25ex\left\vert #1
    \right\vert\kern-0.25ex\right\vert\kern-0.25ex\right\vert_1}}
\DeclareMathOperator{\card}{card}
\DeclareMathOperator{\supp}{supp}
\DeclareMathOperator{\real}{Re}
\DeclareMathOperator{\imag}{Im}
\DeclareMathOperator{\sinc}{sinc}
\DeclareMathOperator*{\Conv}{\raisebox{-4pt}{\mbox{\Huge $\ast$}}}
\newcommand{\Cc}{C_{\mathsf{c}}}
\newcommand{\Cu}{C_{\mathsf{u}}}
\newcommand{\myfrac}[2]{\frac{\raisebox{-2pt}{$#1$}}
      {\raisebox{0.5pt}{$#2$}}}
\begin{document}

\title{A note on tempered measures}

\author{Michael Baake}
\address{Fakult\"at f\"ur Mathematik,
         Universit\"at Bielefeld, \newline
\hspace*{\parindent}Postfach 100131, 33501 Bielefeld, Germany}
\email{mbaake@math.uni-bielefeld.de }

\author{Nicolae Strungaru}
\address{Department of Mathematical Sciences,
         MacEwan University, \newline
\hspace*{\parindent}10700 \ts 104 Avenue,
         Edmonton, AB, Canada T5J 4S2}
\email{strungarun@macewan.ca}

\keywords{Radon measures, transformability, tempered distributions}

\subjclass[2010]{46F05, 52C23}

\begin{abstract}
  The relation between tempered distributions and measures is analysed
  and clarified. While this is straightforward for positive measures,
  it is surprisingly subtle for signed or complex measures.
\end{abstract}

\maketitle

\section{Introduction}

Tempered distributions are the objects of choice for many problems in
harmonic analysis on $\RR^d$, with manifold applications for instance
in mathematical physics. Sometimes, however, one has to deal with
unbounded Radon measures in full generality. While the relations
between them are fairly straightforward for \emph{positive} tempered
measures, things become more subtle for signed or complex measures.

Though this complication is well known in principle \cite{ARMA1}, it
is a bit hidden in the literature and continues to lead to some
typical mistakes.  This is our motivation for this little note, which
is meant to provide the general connection in full generality, stated
as explicitly and concretely as possible.

In fact, we begin with the general case in Section~\ref{sec:general},
where we treat the positive and the general measures separately.  For
the critical statement that a signed or complex tempered measure need
not be slowly increasing, we provide constructive counterexamples in
Section~\ref{sec:counter}. Finally, in Section~\ref{sec:finite}, we
consider the special situation of measures with uniformly discrete
support, which is of particular relevance in the spectral theory of
aperiodic order \cite{TAO1,TAO2}.

\bigskip

\section{The general case}\label{sec:general}

Throughout, $\cS (\RR^d)$ denotes the space of Schwartz functions on
$\RR^d$ and $\cS' (\RR^d)$ the space of tempered distributions, all in
the sense of \cite{Schw}. Clearly, $\cS (\RR^d)$ contains
$C^{\infty}_{\mathsf{c}} (\RR^d)$, the space of $C^{\infty}$-functions
with compact support. For general background and results on Radon
measures, we refer to \cite{Edwards}. If $\mu$ is a positive
measure on $\RR^d$, we write $L^1 (\mu)$ for $L^1(\RR^d, \mu)$.

\begin{definition}\label{def:temp}
  Let $\mu$ be a Radon measure on $\RR^d$. It is a \emph{tempered
    measure} if there exists some $T\in \cS' (\RR^d)$ such that
  $\mu (\varphi) = T (\varphi)$ holds for all
  $\varphi\in C^{\infty}_{\mathsf{c}} (\RR^d)$.

  Further, $\mu$ is called \emph{strongly tempered} when, for all
  $\psi\in\cS(\RR^d)$, we have
  $\lvert \psi \rvert \in L^{1} \bigl( \lvert \mu \rvert \bigr)$
  together with the property that
  $\psi \mapsto \int_{\RR^d} \psi (x) \dd \mu (x) $ defines a tempered
  distribution.
\end{definition}

Here, the first part is the definition of \cite{ARMA1,Schw}, while the
second essentially is the definition from \cite{Kab}, though some care
has to be exercised when it comes to general Radon measures in
comparison to positive ones.

\begin{definition}\label{def:slow}
  A Radon measure $\mu$ on $\RR^d$ is called \emph{slowly increasing}
  if
\[
     \int_{\RR^d}  \frac{\dd | \mu | (x)}{ 1 +
     \lvert P (x) \rvert} \, < \, \infty
\]
holds for some polynomial
$P \in \CC \bigl[ x^{}_{1}, \ldots , x^{}_{d} \bigr]$.
\end{definition}

The second notion in Definition~\ref{def:temp} was originally
introduced in \cite{SpiStru} in a different way, by saying that a
measure is strongly tempered when it is slowly increasing. We shall
later show that these two definitions are equivalent.

Let us begin with a straightforward consequence of our definitions.

\begin{lemma}\label{lem:implications}
  Let\/ $\mu$ be a Radon measure on\/ $\RR^d$. If\/ $\mu$ is slowly
  increasing, it is also strongly tempered. Any strongly tempered
  measure is also tempered.
\end{lemma}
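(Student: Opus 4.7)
The statement splits into two implications, which I would handle separately and in opposite orders of difficulty.

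The second implication (strongly tempered $\Rightarrow$ tempered) is essentially immediate. By definition, if $\mu$ is strongly tempered, there is a distribution $T \in \cS'(\RR^d)$ satisfying $T(\psi) = \int_{\RR^d} \psi(x) \dd \mu(x)$ for every $\psi \in \cS(\RR^d)$. Since $C^{\infty}_{\mathsf{c}}(\RR^d) \subseteq \cS(\RR^d)$, this in particular yields $T(\varphi) = \mu(\varphi)$ for all $\varphi \in C^{\infty}_{\mathsf{c}}(\RR^d)$, which is exactly Definition~\ref{def:temp}. Nothing further is needed here.

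For the first implication, assume $\mu$ is slowly increasing and fix a polynomial $P$ as in Definition~\ref{def:slow}, and set $C \defeq \int_{\RR^d} \frac{\dd\lvert\mu\rvert(x)}{1+\lvert P(x)\rvert} < \infty$. The key observation is that, for any $\psi \in \cS(\RR^d)$, the quantity
\[
   M^{}_P(\psi) \, \defeq \, \sup_{x\in\RR^d}\, \bigl(1+\lvert P(x)\rvert\bigr)\,\lvert\psi(x)\rvert
\]
is finite and, moreover, defines a continuous seminorm on $\cS(\RR^d)$: writing $P(x)=\sum_{\alpha} c_\alpha x^\alpha$, one has $M^{}_P(\psi) \leqslant \lVert \psi\rVert_\infty + \sum_\alpha \lvert c_\alpha\rvert \cdot \lVert x^\alpha \psi\rVert_\infty$, which is a finite combination of standard Schwartz seminorms. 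The estimate
\[
   \int_{\RR^d} \lvert \psi(x)\rvert \dd\lvert\mu\rvert(x)
   \, = \int_{\RR^d} \frac{\bigl(1+\lvert P(x)\rvert\bigr)\lvert\psi(x)\rvert}{1+\lvert P(x)\rvert} \dd\lvert\mu\rvert(x)
   \, \leqslant \, M^{}_P(\psi)\cdot C
\]
then simultaneously proves $\lvert\psi\rvert \in L^1\bigl(\lvert\mu\rvert\bigr)$ and gives
\[
   \biggl\lvert \int_{\RR^d} \psi(x) \dd\mu(x) \biggr\rvert
   \, \leqslant \, C \cdot M^{}_P(\psi)
\]
for all $\psi\in\cS(\RR^d)$. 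Continuity of the linear functional $\psi \mapsto \int \psi \dd\mu$ with respect to the Schwartz topology follows, so it defines a tempered distribution, which is precisely the strongly tempered condition.

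The only non-routine ingredient is recognising that $M^{}_P$ is dominated by finitely many Schwartz seminorms, and I expect this to be the step one must spell out most carefully; everything else is direct manipulation of the definitions. No appeal to specific structural properties of Radon measures beyond the defining integrability is needed.
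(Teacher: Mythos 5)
Your proof is correct and takes essentially the same route as the paper: the key estimate $\int \lvert\psi\rvert \dd\lvert\mu\rvert \leqslant \|(1+\lvert P\rvert)\ts\psi\|_\infty \cdot C$ is precisely the paper's argument, and the second implication is dispatched identically. You are a bit more explicit than the paper in spelling out why $M^{}_P$ is dominated by finitely many Schwartz seminorms, but that is a matter of detail rather than a different approach.
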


\begin{proof}
   The first claim follows from the observation that
\[
\begin{split}
    \big\lvert \mu (\psi) \big\rvert \, & \leqslant
    \int_{\RR^d} \lvert \psi (x) \rvert \dd \lvert \mu \rvert (x)
    \, = \int_{\RR^d} \big\lvert \psi (x) \big\rvert \bigl( 1 +
    \lvert P (x) \rvert \bigr) \, \frac{\dd \lvert \mu \rvert (x)}
    {1 + \lvert P (x) \rvert} \\[2mm] & \leqslant \,
    \big\| \bigl( 1 + \lvert P \rvert \bigr) \ts \psi \big\|_{\infty}
    \int_{\RR^d} \frac{\dd \lvert \mu \rvert (x)}
    {1 + \lvert P (x) \rvert}
\end{split}
\]
holds for any $\psi \in \cS (\RR^d)$, where the last integral is a
finite constant.

The second claim is obvious. Indeed, if $\mu$ is strongly
tempered, $ \psi \mapsto
T (\psi) = \int_{\RR^d} \psi (x) \dd \mu (x) $
defines a tempered distribution. Moreover, for any
$\varphi \in C^{\infty}_{\mathsf{c}} (\RR^d)$, we have
$T (\varphi) = \mu (\varphi) $ by definition.
\end{proof}

Let us continue with one result that looks technical, but is actually
fundamental. Its proof follows the arguments from
\cite[Thm.~2.1]{Kab}, but we present it in full detail here for
improved readability and because various of our deviations will become
significant later. To increase its readability, we split it into a
preliminary lemma and the main result as Proposition~\ref{prop:L-one}.

For simplicity, for multi-indices $\alpha$ and $\beta$,
 we denote by $\| . \|^{}_{\alpha, \beta}$ the Schwartz norm,
\[
  \| f \|^{}_{\alpha, \beta} \, \defeq  \sup_{x\in\RR^d}
  \big\lvert x^{\beta} \ts D^{\alpha} \nts f (x) \big\rvert \ts ,
\]
with $x^{\beta} = x^{\beta_1}_{1} \nts \cdots \ts x^{\beta_d}_{d}$
and $D^{\alpha} = \bigl( \frac{\partial}{\partial x^{}_{1}}
\bigr)^{\alpha^{}_1} \cdots \bigl( \frac{\partial}
{\partial x^{}_{d}}\bigr)^{\alpha^{}_d}$ as usual.

\begin{lemma}\label{lem:ex-pos}
  Let\/ $k_n \in \NN$ and\/ $c_n \in (0, \infty)$ define two sequences
  with the following properties,
\begin{enumerate}\itemsep=2pt
  \item $k_1 \geqslant 4 \ts$ and\/ $k_{n+1} \geqslant k_n +4$ for
      all\/ $n \geqslant 1\ts $,
  \item for all\/ $N\in\NN$, the sequence\/
      $\bigl( c^{}_n \ts 2^{(k_n - 3) N}\bigr)_{n\in\NN}$ is bounded.
\end{enumerate}
Then, there exists some non-negative\/ $\psi \in \cS(\RR^d)$ such that
$ \psi(x) = c_n$ holds for all\/ $n\in\NN$ and all\/ $x$ with\/
$2^{\ts k_n-1} \leqslant |x|^{}_{2} \leqslant 2^{\ts k_n+1}$, where\/
$|.|^{}_{2}$ refers to the Euclidean norm on\/ $\RR^d$.
\end{lemma}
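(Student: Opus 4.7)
The plan is to build $\psi$ as a locally finite sum of rescaled non-negative radial bumps, one per shell, and then verify Schwartz decay via hypothesis (2). Concretely, I would fix a non-negative radial $\chi \in C^{\infty}_{\mathsf{c}}(\RR^d)$ with $\chi \equiv 1$ on $\{1/2 \leqslant |x|_2 \leqslant 2\}$ and $\supp(\chi) \subseteq \{1/3 \leqslant |x|_2 \leqslant 3\}$, and then set $\psi_n(x) \defeq c_n \ts \chi(x/2^{k_n})$. By construction, $\psi_n(x) = c_n$ on the prescribed shell $\{2^{k_n-1} \leqslant |x|_2 \leqslant 2^{k_n+1}\}$, while $\supp(\psi_n) \subseteq \{2^{k_n}/3 \leqslant |x|_2 \leqslant 3 \cdot 2^{k_n}\}$. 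Condition (1) gives $k_{n+1} - k_n \geqslant 4$, hence $2^{k_{n+1}}/3 \geqslant 2^4 \cdot 2^{k_n}/3 > 3 \cdot 2^{k_n}$, so the supports of the $\psi_n$ are pairwise disjoint. Thus $\psi \defeq \sum_n \psi_n$ is a well-defined non-negative $C^{\infty}$ function with the required values on the shells.

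To verify $\psi \in \cS(\RR^d)$, observe that a direct computation gives $\bigl\lvert D^\alpha \psi_n(x) \bigr\rvert = c_n \ts 2^{-k_n |\alpha|} \bigl\lvert (D^\alpha \chi)(x/2^{k_n}) \bigr\rvert$, while $\lvert x^\beta \rvert \leqslant \lvert x \rvert_2^{|\beta|} \leqslant (3 \cdot 2^{k_n})^{|\beta|}$ on $\supp(\psi_n)$. Since the supports are disjoint,
\[
     \|\psi\|^{}_{\alpha,\beta} \, = \, \sup_{n\in\NN} \|\psi_n\|^{}_{\alpha,\beta}
     \, \leqslant \, 3^{|\beta|} \ts \|D^\alpha \chi\|^{}_{\infty} \ts
     \sup_{n\in\NN} c_n \ts 2^{k_n(|\beta|-|\alpha|)} \ts ,
\]
so it suffices to know that $\sup_n c_n \ts 2^{k_n N}$ is finite for every $N \in \NN$. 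This is exactly what hypothesis (2) delivers, since $c_n \ts 2^{k_n N} = 2^{3N} \bigl( c_n \ts 2^{(k_n - 3)N} \bigr)$ and the right factor is bounded in $n$ by assumption.

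The only genuine obstacle is matching the polynomial weight $|x^\beta|$, which is as large as $2^{k_n |\beta|}$ on $\supp(\psi_n)$, against the size of $c_n$. Hypothesis (2) is calibrated precisely to kill this growth at every polynomial order $N$, and the cosmetic shift from $k_n$ to $k_n - 3$ merely absorbs the constant slack between the prescribed inner shell $|x|_2 \geqslant 2^{k_n - 1}$ and the enlarged smooth support $|x|_2 \leqslant 3 \cdot 2^{k_n}$.
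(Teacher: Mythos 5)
Your proof is correct and takes essentially the same approach as the paper: choose a fixed smooth bump equal to $1$ on an annulus, rescale it to each shell (you scale by $2^{k_n}$ instead of the paper's $2^{k_n-3}$, hence the different annuli), observe that condition (1) gives disjoint supports, and verify the Schwartz seminorm bound from condition (2) via the scaling identity for $x^\beta D^\alpha$. One small point both you and the paper gloss over: when $N=|\beta|-|\alpha|\leqslant 0$, hypothesis (2) does not literally apply, but boundedness of $c_n 2^{k_n N}$ then follows because $c_n$ itself is bounded (take $N=1$ in (2) and use $k_n\geqslant 4$).
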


\begin{proof}
  Select a non-negative function
  $\varphi\in C^{\infty}_{\mathsf{c}} (\RR^d)$ with $\varphi (x) = 1$
  for all $4 \leqslant \lvert x \rvert^{}_{2} \leqslant 16$ and with
  $\supp (\varphi) \subset \{ x : 2 < \lvert x \rvert^{}_{2} < 32 \}$,
  and set $\varphi^{}_{n} (x) = \varphi \bigl(x/2^{k_n - 3} \bigr)$
  for each $n$, which all are non-negative functions. Also, one has
  $\varphi^{}_{n} (x) = 1$ for all
  $2^{k_n-1} \leqslant |x|^{}_{2} \leqslant 2^{k_n+1}$ together with
\[
    \supp (\varphi^{}_{n}) \, \subset \, \big\{ x : 2^{k_n - 2} <
    \lvert x \rvert^{}_{2} < 2^{k_n + 2} \bigr\}.
\]
In particular, the functions $\varphi^{}_n$ have pairwise disjoint
supports. Next, consider the non-negative function
$\psi \defeq \sum_{n=1}^{\infty} c^{}_n \ts \varphi^{}_{n}$ which
satisfies the properties guaranteed by Lemma~\ref{lem:ex-pos}.
Therefore, if we show that $\psi \in \cS(\RR^d)$, we are done.

Let $\alpha$ and $\beta$ be arbitrary multi-indices, and set
$N=|\beta|-|\alpha|$, where
$|\beta| = \beta^{}_1 + \ldots + \beta^{}_d$ as usual. By (2), there
exist constants $C_{\alpha, \beta} = C_{\alpha, \beta} (N)$ such that
\[
  c^{}_n \ts 2^{(k_n - 3) N}
  \, \leqslant \, C^{}_{\alpha, \beta}
  \qquad \text{holds for all } \ts n \in \NN \ts .
\]
For arbitrary but fixed $x \in \RR^d$, one of the following
two cases applies.

\emph{Case $1$}. There is no $n \in \NN$ such that
$ 2^{k_n - 2} \leqslant \lvert x \rvert^{}_{2} \leqslant 2^{k_n + 2}$.
Then, we have $\psi \equiv 0$ in a neighbourhood of $x$ by
construction, and hence
\[
  \big| \ts x^{\beta} D^{\alpha} \psi (x)
  \big| \, =  \, 0 \ts .
\]

\emph{Case $2$}. There is some $n \in \NN$ such that
$ 2^{k_n - 2} \leqslant \lvert x \rvert^{}_{2} \leqslant 2^{k_n + 2}$.
Then, since $k_{j+1} > k_{j}+4$ for all $j$, this $n$ is unique.  The
pairwise disjoint supports of the functions $\varphi^{}_{n}$ then
imply that $\psi  =  c^{}_n \varphi^{}_n $
holds in a neighbourhood of $x$. Therefore, we get
\[
  \big| x^{\beta}  D^{\alpha} \psi (x) \big| \,
   =  \, \big| x^{\beta} D^{\alpha} c^{}_n \varphi^{}_n (x) \big|
  \, =  \, c^{}_n  \big| x^{\beta} D^{\alpha} \bigl(
    \varphi \bigl(x/2^{k_n - 3} \bigr) \bigr) \big|
  =  \, \frac{ c^{}_n  \lvert x^{\beta} \rvert}
   {2^{(k_n - 3)\ts |\alpha|}} \big| \bigl(  D^{\alpha}\varphi \bigr)
   \bigl(x/2^{k_n - 3} \bigr) \big|  .
\]
With $x = 2^{k_n - 3} \ts y$, this gives
\begin{align*}
   \big| x^{\beta}  D^{\alpha} \psi (x) \big| \, & =
    \, \frac{ c^{}_n   \ts 2^{(k_n - 3) \ts \lvert \beta \rvert}
    \ts \lvert y^{\beta} \rvert}{ 2^{(k_n - 3)\ts |\alpha|}} \big|
    \bigl( D^{\alpha}\varphi \bigr)  (y) \big|  \, =  \,
    c^{}_n \ts 2^{(k_n - 3) \ts  (|\beta| -|\alpha| )}  \big| y^{\beta}
     \bigl( D^{\alpha} \varphi \bigr)   (y) \big| \\[2mm]
    &\leqslant \, c^{}_n \ts 2^{(k_n - 3) \ts (|\beta|-|\alpha|)}
      \| \varphi \|^{}_{\alpha, \beta} \,  \leqslant \,
      C^{}_{\alpha, \beta}   \| \varphi \|^{}_{\alpha, \beta} \ts .
\end{align*}

For any $x \in \RR^d$, one of the two cases applies, and we thus
always obtain the estimate
$ \big| x^{\beta} D^{\alpha} \psi (x) \big| \leqslant C^{}_{\alpha,
  \beta} \| \varphi \|^{}_{\alpha, \beta} $ and hence also
\[
   \| \psi \|^{}_{\alpha, \beta} \, \leqslant \,
   C^{}_{\alpha, \beta}   \| \varphi \|^{}_{\alpha, \beta} \ts .
\]
Since the multi-indices $\alpha$ and $\beta$ were arbitrary, this
estimate shows that $\psi \in \cS(\RR^d)$, thus completing the proof.
\end{proof}

As a consequence, we obtain the following result, which is the key to
relating our notions to the class of \emph{positive} Radon measures.

\begin{prop}\label{prop:L-one}
  Let\/ $\mu$ be a positive Radon measure on\/ $\RR^d$ such that all\/
  $\varphi\in\cS (\RR^d)$ with\/ $\varphi \geqslant 0$ satisfy\/
  $\varphi\in L^{1} (\mu)$. Then, $\mu$ is slowly increasing.
\end{prop}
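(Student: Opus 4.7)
I would argue by contradiction: assume that $\mu$ is \emph{not} slowly increasing, and construct a non-negative $\psi \in \cS(\RR^d)$ with $\int \psi \dd \mu = \infty$. This contradicts the hypothesis that every non-negative Schwartz function lies in $L^1(\mu)$. The vehicle for producing $\psi$ is Lemma~\ref{lem:ex-pos}: the whole task reduces to exhibiting sequences $(k_n)$ and $(c_n)$ satisfying conditions (1) and (2) there, while arranging that $\mu$ carries enough mass on the annuli $A_n = \{2^{k_n - 1} \leqslant |x|^{}_{2} \leqslant 2^{k_n + 1}\}$ that $\sum_n c_n \ts \mu(A_n) = \infty$.

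The first step is to unpack the failure of slow increase. For each $N \in \NN$, the function $P(x) = (x_1^2 + \cdots + x_d^2)^N$ is a polynomial, so the negation of Definition~\ref{def:slow} forces
\[
  \int \frac{\dd \mu(x)}{1 + |x|^{2N}_{2}} \, = \, \infty
  \qquad \text{for every } N \in \NN \ts .
\]
Decomposing $\{|x|^{}_{2} \geqslant 1\}$ into dyadic shells $B_k = \{2^k \leqslant |x|^{}_{2} < 2^{k+1}\}$ and setting $a_k \defeq \mu(B_k)$, this divergence is equivalent (up to bounded factors) to $\sum_{k \geqslant 0} a_k \ts 2^{-2kN} = \infty$ for every $N$. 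Comparing with $\sum 1/k^2$, for every $N$ and every $M \in \NN$ there must exist some $k \geqslant M$ with $a_k \geqslant 2^{2kN}/k^2$; otherwise the tail $\sum_{k \geqslant M} a_k \ts 2^{-2kN}$ would be dominated by $\sum 1/k^2$ and hence finite.

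This powers an inductive choice. Pick $k_1 \geqslant 4$ with $a_{k_1} \geqslant 2^{2k_1}/k_1^2$, and for $n \geqslant 2$ pick $k_n \geqslant k_{n-1} + 4$ with $a_{k_n} \geqslant 2^{\ts 2 n k_n}/k_n^2$; then set $c_n \defeq 1/(n \ts a_{k_n})$. Condition (1) of Lemma~\ref{lem:ex-pos} is built in, and for every fixed $N$,
\[
  c_n \ts 2^{(k_n - 3) N} \, \leqslant \,
  \frac{k_n^2 \ts 2^{-3N}}{n \ts 2^{\ts k_n (2n - N)}} \ts ,
\]
which tends to $0$ as $n \to \infty$ (once $2n > N$, the exponent in the denominator grows without bound), so condition (2) holds as well. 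Lemma~\ref{lem:ex-pos} then furnishes $\psi \in \cS(\RR^d)$, $\psi \geqslant 0$, with $\psi \equiv c_n$ on $A_n \supset B_{k_n}$. Since the shells are pairwise disjoint, positivity of $\mu$ and $\psi$ yields
\[
  \int \psi \dd \mu \, \geqslant \, \sum_{n \geqslant 1}
  c_n \ts \mu(B_{k_n}) \, \geqslant \,
  \sum_{n \geqslant 1} c_n \ts a_{k_n} \, = \,
  \sum_{n \geqslant 1} \frac{1}{n} \, = \, \infty \ts ,
\]
the desired contradiction.

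The main obstacle is calibrating the trade-off between condition (2) and the divergence of $\sum_n c_n \ts \mu(A_n)$: the coefficients $c_n$ must decay faster than every negative power of $2^{k_n}$ for $\psi$ to be Schwartz, yet $c_n \ts \mu(A_n)$ must fail to be summable. The non-slow-increase hypothesis is exactly what produces a subsequence along which $\mu(B_{k_n})$ grows faster than any power of $2^{k_n}$, making this balance possible.
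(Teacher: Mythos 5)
Your proof is correct and follows essentially the same route as the paper: argue by contradiction and apply Lemma~\ref{lem:ex-pos} to dyadic annuli whose $\mu$-mass grows faster than any power of their radius, which makes the resulting Schwartz function non-integrable. The only structural difference is that the paper first establishes an intermediate polynomial growth bound $\mu(A_j) \leqslant c\, 2^{aj}$ (itself proved by the contradiction you run) and then separately checks slow increase from it, whereas you negate slow increase directly and select the shells via a comparison with $\sum 1/k^2$; the $1/k_n^2$ safety margin and the extra factor $1/n$ in your $c_n = 1/(n\, a_{k_n})$ do the same work as the paper's choice $c_n = 1/\mu(A_{k_n})$.
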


\begin{proof}
Set $A^{}_{0} = \{ x\in\RR^d : \lvert x \rvert^{}_{2} \leqslant 1 \}$,
and $A_{j} = \{ x \in \RR^d : 2^{j-1} \leqslant \lvert x \rvert^{}_{2}
\leqslant 2^{j} \}$ for $j\in\NN$. We now show that there are
constants $c>0$ and $a \geqslant 0$ such that
\begin{equation}\label{eq:eq-1}
  \mu (A_j) \, \leqslant \, c \, 2^{a j} \quad
  \text{holds for all } j \geqslant 0 \ts .
\end{equation}
Assume this to be false. Then, for all $c>0$ and $a \geqslant 0$,
there is some $j = j(c,a)$ with $\mu (A_j) > c \, 2^{a j}$.
Setting $c=1$, we get, for all $\ell\in\NN_0$, some $k^{}_{\ell} \in
\NN_0$ such that
\begin{equation}\label{eq:eq-2}
   \mu (A^{}_{k_{\ell}}) \, > \, 2^{\ts \ell \ts k_{\ell}} .
\end{equation}
In fact, for each $\ell\in\NN_0$, there must be infinitely many
such $k^{}_{\ell}$. Indeed, assume to the contrary that there is
some $\ell^{}_{0}$ for which
\begin{equation}\label{eq:eq-3}
   \mu (A_k) \, > \, 2^{\ts\ell_0 \ts k}
\end{equation}
holds only for finitely many $k$, say for $k^{}_{1}, \ldots ,
k_j$. Then, for each $1\leqslant i \leqslant j$, we can find some
$\ell_i$ with $\mu (A_i) < 2^{\ts \ell_i \ts k_i}$. Consequently, for
$\ell > \max \{ \ell^{}_{0}, \ell^{}_{1}, \ldots, \ell^{}_{j} \}$, one
has
\[
  \mu (A_{k_i}) \, < \, 2^{\ts \ell_i \ts k_i}
  \, < \, 2^{\ts \ell \ts k_i}
\]
for $1\leqslant i \leqslant j$ together with
$ \mu (A_{k}) < 2^{\ts \ell_{0} \ts k}
    \, < \, 2^{\ts \ell \ts k} $
for all $k \not\in \{ k^{}_{1}, \ldots , k^{}_{j} \}$.
This shows that \eqref{eq:eq-2} holds for infinitely
many integers $k$.

Consequently, for all $\ell\in\NN_0$, there are infinitely many $k$
with $\mu (A_k) > 2^{k \ts \ell}$. We can then construct a sequence
$4 < k^{}_{1} < k^{}_{2} < \cdots$ such that
$k^{}_{j+1} > k^{}_{j} + 4$ holds for all $j\in\NN$ together with
$\mu (A_{k_j}) > 2^{j \ts k_j}$.

Setting $c^{}_{j} = 1/\mu(A_{k_j})$, we see that
$c^{}_{j} < 2^{- j  k_j}$ and hence, for all $N \in \NN$, we have
 \[
    \limsup_{n \to\infty}  \, c^{}_n \ts 2^{(k_n - \ts 3) N}
    \, \leqslant \, \limsup_{n\to\infty}  2^{(k_n - \ts 3)
    N\nts - \ts n k_n} \, = \, 0 \ts .
\]
In particular, $c^{}_n \ts 2^{(k_n - 3) \ts N}$ is bounded for all
$N \in \NN$. It follows that $k^{}_{n}$ and $c^{}_{n}$ satisfy the
conditions of Lemma~\ref{lem:ex-pos}. Consequently, there exists some
non-negative function $\psi \in \cS(\RR^d)$ with
$ \psi(x) = 1/\mu(A_{k_n})$ for all $ x \in A^{}_{k_n}$.  Since the
sets $A_{k_n}$ are pairwise disjoint, it follows that, for any
$N \in \NN$, we have
\[
  \mu(\psi) \,
   \geqslant \int^{}_{\bigcup_{n=1}^{N} A^{}_{k_n} } \psi(x) \dd \mu(x)
    \, = \ts \sum_{n=1}^{N}  \int^{}_{A_{k_n}} \psi(x) \dd \mu(x)
   = \ts \sum_{n=1}^{N} \int^{}_{A^{}_{k_n}} \frac{\dd \mu (x)}
   {\mu(A^{}_{k_n})}  \, =  \, N  ,
\]
which contradicts the fact that $\psi \in L^1(\mu)$. So, our
assumption is wrong. This shows that there are some constants $c>0$
and $a \geqslant 0$ such that \eqref{eq:eq-1} holds for all
$j \geqslant 0$.  Also, after possibly replacing $a$ by a larger
number, we may assume $a\in\NN$ without loss of generality.

Then, since we have $\RR^d =\bigcup_{j=0}^{\infty} A_j$, where the
$A_j$ have disjoint interior but some common boundary for consecutive
values of $j$, we also have
\[
  \int^{}_{\RR^d} \frac{\dd \mu (x)}{1+|x|^{a+1}_{2}}
  \, \leqslant  \int^{}_{A^{}_0} \frac{\dd \mu (x)}{1+|x|^{a+1}_{2}}
       \: + \ts  \sum_{j=1}^{\infty} \int^{}_{A_j}
       \frac{\dd \mu (x)}{1+|x|^{a+1}_{2}} \ts .
\]
Since $A^{}_0$ is compact, we clearly have
\[
  I^{}_0 \, \defeq \int^{}_{A^{}_0}
  \frac{\dd \mu (x)}{1+|x|^{a+1}_{2}}
  \, \leqslant \, \mu ( A^{}_{0}) \, < \, \infty \ts .
\]
Next, for all $j \geqslant 1$, we get
\begin{align*}
  I_j \, & \defeq \int^{}_{A_j} \frac{\dd \mu (x)}{1+|x|^{a+1}_{2}}
           \,=  \int^{}_{2^{j-1} \leqslant \lvert x \rvert^{}_{2} \leqslant 2^{j+1}}
           \frac{\dd \mu (x)}{1+|x|^{a+1}_{2}} \\[2mm]
     \, & \leqslant  \frac{\mu (A_j )}{1+2^{(j-1)(a+1)}}
       \, \leqslant \, \frac{c \, 2^{a j}}{2^{(j-1) (a+1)}}
       \, = \, c \, 2^{- j} \ts  2^{\ts a + 1}  .
\end{align*}
This shows that
\[
  \int^{}_{\RR^d} \frac{\dd \mu (x) }{1+|x|^{a+1}_{2}}
  \, \leqslant \, I^{}_{0} \: + \,  c \, 2^{\ts a + 1} \ts
  \sum_{j=1}^{\infty} 2^{-j} \, = \, I^{}_{0} \: + \,
  c \, 2^{\ts a + 1} \, < \, \infty \ts ,
\]
which completes the proof.
\end{proof}

At this point, we get the following result.

\begin{theorem}\label{thm:positive}
  For a positive Radon measure\/ $\mu$ on\/ $\RR^d$, the following
  properties are equivalent:
\begin{enumerate}\itemsep=2pt
\item $\mu$ is slowly increasing;
\item $\mu$ is strongly tempered;
\item one has\/ $\lvert \psi \rvert \in L^{1} (\mu)$
    for all\/ $\psi \in \cS (\RR^d)$;
\item one has\/ $\psi \in L^{1} (\mu)$ for all\/
    $\psi \in \cS (\RR^d)$ with\/ $\psi \geqslant 0$;
\item $\mu$ is tempered.
\end{enumerate}
\end{theorem}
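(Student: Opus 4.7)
The plan is to prove the equivalence via the cycle
$(1) \Rightarrow (2) \Rightarrow (3) \Rightarrow (4) \Rightarrow (1)$, and then handle $(5)$ separately by showing $(2) \Rightarrow (5)$ and $(5) \Rightarrow (4)$. Almost all implications are already packaged or nearly trivial, so the work concentrates on the single step $(5) \Rightarrow (4)$.

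The easy part first. By Lemma~\ref{lem:implications}, $(1) \Rightarrow (2) \Rightarrow (5)$. Since $\mu$ is positive we have $\lvert \mu \rvert = \mu$, so the defining property of strong temperedness gives $\lvert \psi \rvert \in L^1(\mu)$ for every $\psi \in \cS(\RR^d)$, which is $(2) \Rightarrow (3)$. The implication $(3) \Rightarrow (4)$ is trivial (take $\psi = \lvert \psi \rvert$), and $(4) \Rightarrow (1)$ is Proposition~\ref{prop:L-one}. This closes the small cycle and yields equivalence of $(1)$--$(4)$, together with $(1) \Rightarrow (5)$.

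The main step is $(5) \Rightarrow (4)$. Let $T \in \cS'(\RR^d)$ satisfy $T(\varphi) = \mu(\varphi)$ for all $\varphi \in C^{\infty}_{\mathsf{c}}(\RR^d)$, and fix an arbitrary non-negative $\psi \in \cS(\RR^d)$. I would choose a cutoff $\rho \in C^{\infty}_{\mathsf{c}}(\RR^d)$ with $0 \leqslant \rho \leqslant 1$ and $\rho = 1$ on the unit ball, set $\rho_n(x) \defeq \rho(x/n)$, and consider $\psi_n \defeq \psi \ts \rho_n \in C^{\infty}_{\mathsf{c}}(\RR^d)$. Since $\mu$ is positive and $0 \leqslant \psi_n \nearrow \psi$ pointwise, monotone convergence gives
\[
   \int_{\RR^d} \psi_n \dd \mu \; \xrightarrow[n\to\infty]{} \;
   \int_{\RR^d} \psi \dd \mu \; \in \; [0, \infty].
\]
On the other hand, $\int \psi_n \dd \mu = \mu(\psi_n) = T(\psi_n)$ because $\psi_n$ has compact support, so if we can show $\psi_n \to \psi$ in the Schwartz topology, continuity of $T$ forces $T(\psi_n) \to T(\psi)$, a finite limit. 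Combining, $\int \psi \dd \mu = T(\psi) < \infty$, establishing $(4)$.

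The only remaining obstacle is the convergence $\psi_n \to \psi$ in $\cS(\RR^d)$, which is a standard but somewhat fiddly computation. Applying Leibniz to $D^{\alpha}(\psi(\rho_n - 1))$, the diagonal term $D^{\alpha}\psi \cdot (\rho_n - 1)$ vanishes on $\lvert x \rvert^{}_{2} \leqslant n$ and is controlled off this ball by the rapid decay of $D^{\alpha}\psi$. The off-diagonal terms contain factors $D^{\gamma}\rho_n(x) = n^{-\lvert \gamma \rvert}(D^{\gamma}\rho)(x/n)$ supported in an annulus $\lvert x \rvert^{}_{2} \gtrsim n$; on that annulus the Schwartz seminorms of $\psi$ let us trade any desired power of $\lvert x \rvert^{}_{2}$ for the small factor $n^{-M}$. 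Hence $\lVert \psi_n - \psi \rVert^{}_{\alpha, \beta} \to 0$ for every pair of multi-indices, completing the argument.
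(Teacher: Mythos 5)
Your proof takes essentially the same route as the paper's: the same implication cycle $(1)\Rightarrow(2)\Rightarrow(3)\Rightarrow(4)\Rightarrow(1)$ with $(5)$ handled via $(2)\Rightarrow(5)$ and $(5)\Rightarrow(4)$, and the key step $(5)\Rightarrow(4)$ rests on the same idea of multiplying $\psi$ by an increasing sequence of smooth cutoffs, using Schwartz-topology convergence and monotone convergence (the paper cites Lang for the cutoff sequence where you build it by dilation). One small caution: $\rho_n(x)=\rho(x/n)$ is not automatically non-decreasing in $n$ for a generic cutoff $\rho$ satisfying only $0\leqslant\rho\leqslant 1$ and $\rho\equiv 1$ near $0$, so to invoke monotone convergence you should also require $\rho$ to be radial and radially non-increasing (or replace monotone convergence by Fatou's lemma, which yields $\int\psi\dd\mu\leqslant\liminf_n T(\psi_n)=T(\psi)<\infty$ without any monotonicity).
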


\begin{proof}
  $(1) \Rightarrow (2)$ is the first claim of
  Lemma~\ref{lem:implications}, while $(2) \Rightarrow (3)$ follows
  from Definition~\ref{def:temp} and $(3) \Rightarrow (4)$ is obvious.
  Further, $(4) \Rightarrow (1)$ is Proposition~\ref{prop:L-one},
  while $(2) \Rightarrow (5)$ is the second claim of
  Lemma~\ref{lem:implications}.

  To complete the proof, we could infer \cite[p.~242]{Schw} to obtain
  $(5) \Rightarrow (1)$, but we prefer to show $(5) \Rightarrow (4)$
  as follows. Let $\psi$ be any fixed, non-negative Schwartz function.
  Then, invoking a minor variant of the $C^{\infty}$ partitions of
  unity \cite[p.~299]{Lang}, there is a sequence of functions
  $\varphi^{}_{n} \in C^{\infty}_{\mathsf{c}} (\RR^d)$ with
  $\varphi^{}_{1} \leqslant \varphi^{}_{2} \leqslant \ldots$ such that
  $\varphi^{}_{n} = 1$ on $\{ \lvert x \rvert \leqslant n\}$ and
  $\varphi^{}_{n} \ts \psi \xrightarrow{n\to\infty} \psi$ in
  $\cS (\RR^d)$.

  Since $\mu$ is tempered, there is some $T \in \cS' (\RR^d)$ such
  that $T (\varphi) = \mu (\varphi)$ holds for all
  $\varphi\in C^{\infty}_{\mathsf{c}} (\RR^d)$. Then, by the
  monotone convergence theorem \cite[Thm.~5.5]{Lang}, we have
\[
  \mu (\psi) \, = \lim_{n\to\infty} \mu ( \varphi^{}_{n} \ts \psi)
  \, = \lim_{n\to\infty} T ( \varphi^{}_{n} \ts \psi)
  \, = \, T (\psi) \, < \, \infty \ts ,
\]
  which completes the argument.
\end{proof}

For the general situation, we now get the following result.

\begin{theorem}\label{thm:relations}
  Let\/ $\mu$ be a general Radon measure on\/ $\RR^d$. Then, the
  following properties are equivalent:
\begin{enumerate}\itemsep=2pt
\item $\mu$ is slowly increasing;
\item $\lvert \mu \rvert$ is strongly tempered;
\item $\lvert \mu \rvert$ is tempered;
\item $\lvert \psi \rvert \in L^{1} \bigl( \lvert \mu \rvert \bigr)$
   holds for all\/ $\psi \in \cS (\RR^d )$;
\item $\mu$ is strongly tempered.
\end{enumerate}
Further, if\/ $\mu$ is strongly tempered, it is tempered, while the
converse need not hold.
\end{theorem}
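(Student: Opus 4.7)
The plan is to reduce the equivalences (1)--(4) to Theorem~\ref{thm:positive} applied to the positive Radon measure $\lvert \mu \rvert$, and then to bring (5) into the chain via Lemma~\ref{lem:implications} together with the definition of strongly tempered.

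First I would observe that, by Definition~\ref{def:slow}, the property ``$\mu$ is slowly increasing'' is defined entirely in terms of $\lvert \mu \rvert$, so (1) here coincides verbatim with condition (1) of Theorem~\ref{thm:positive} applied to the positive Radon measure $\lvert \mu \rvert$. In the same way, conditions (2), (3) and (4) of Theorem~\ref{thm:relations} correspond to conditions (2), (5) and (3), respectively, of Theorem~\ref{thm:positive} for $\lvert \mu \rvert$. Consequently, the equivalence of (1)--(4) follows immediately from Theorem~\ref{thm:positive}, without any additional work.

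To incorporate (5), I would argue in two steps. For (5)$\Rightarrow$(4), it suffices to read off Definition~\ref{def:temp}: a strongly tempered measure satisfies $\lvert \psi \rvert \in L^{1}(\lvert \mu \rvert)$ for every $\psi \in \cS(\RR^d)$ by definition, which is exactly (4). For the reverse direction, I would go through (1): assuming (4), we already have $\mu$ slowly increasing from the equivalence above, and the first claim of Lemma~\ref{lem:implications} then gives (5). This closes the circle and establishes the equivalence of all five properties.

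Finally, the supplementary assertion splits into two parts. The implication ``strongly tempered $\Rightarrow$ tempered'' is precisely the second statement of Lemma~\ref{lem:implications}, so nothing new is required. That the converse may fail will be substantiated by the explicit signed (or complex) counterexamples constructed in Section~\ref{sec:counter}, which is where the genuine work sits; within the proof of Theorem~\ref{thm:relations} itself, I would simply cite that forthcoming section. I do not expect any real obstacle here: the proof is essentially a matter of matching the enumerated conditions of Theorem~\ref{thm:positive} with the present ones and invoking Lemma~\ref{lem:implications} in the right direction.
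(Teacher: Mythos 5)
Your proposal is correct and follows essentially the same route as the paper: reduce (1)--(4) to Theorem~\ref{thm:positive} applied to $\lvert\mu\rvert$, bring in (5) via the definition of strong temperedness on one side and the first claim of Lemma~\ref{lem:implications} on the other, get ``strongly tempered $\Rightarrow$ tempered'' from the second claim of that lemma, and defer the failure of the converse to a counterexample. The only cosmetic difference is that the paper cites \cite[Prop.~7.1]{ARMA1} directly at this point rather than the explicit constructions of Section~\ref{sec:counter}, but those are just a detailed rendering of that same example, so the substance is identical.
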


\begin{proof}
  The equivalences of the first four condintions follow immediately
  from Theorem~\ref{thm:positive}, while the implications
\[
   \mu \text{ slowly increasing} \; \Longrightarrow \;
   \mu \text{ strongly tempered} \; \Longrightarrow \;
   \mu \text{ tempered}
\]
  are the result of Lemma~\ref{lem:implications}.

  Finally, the fact that $\mu$ is strongly tempered implies
  $\lvert \psi \rvert \in L^{1} \bigl( \lvert \mu \rvert \bigr)$ for
  all $\psi \in \cS (\RR^d)$ follows directly from the definition.

  \cite[Prop.~7.1]{ARMA1} provides an example of a Radon measure $\mu$
  that is tempered, though $\lvert \mu \rvert$ is not. In particular,
  $\mu$ is not slowly increasing.
\end{proof}

Let us now spell out the implication (4) $\Longrightarrow$ (5) in
Theorem~\ref{thm:relations} more explicitly as follows.

\begin{coro}
  Let\/ $\mu$ be a measure on\/ $\RR^d$, and assume that\/
  $\lvert \psi \rvert \in L^{1} \bigl( \lvert \mu \rvert \bigr)$ holds
  for all\/ $\psi \in \cS (\RR^d )$. Then,
  $\psi\ts \mapsto\nts \int^{}_{\RR^d} \psi(t) \dd \mu(t)$ defines a
  tempered distribution. \qed
\end{coro}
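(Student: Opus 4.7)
The plan is to observe that this corollary is a direct repackaging of the implication $(4) \Rightarrow (5)$ in Theorem~\ref{thm:relations}, but made more transparent by tracing through the intermediate equivalence with slow increase. The hypothesis is exactly condition $(4)$ of Theorem~\ref{thm:relations}, so by the chain of equivalences established there, $\mu$ is slowly increasing. Hence one can fix a polynomial $P \in \CC [x^{}_{1}, \ldots , x^{}_{d}]$ with
\[
  C \, \defeq \int^{}_{\RR^d} \frac{\dd \lvert \mu \rvert (x)}{1+\lvert P(x) \rvert} \, < \, \infty \ts .
\]

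Define $T (\psi) \defeq \int^{}_{\RR^d} \psi (t) \dd \mu (t)$, which is well defined by the hypothesis and clearly linear in $\psi$. For $\varphi \in C^{\infty}_{\mathsf{c}} (\RR^d)$, the integral equals $\mu (\varphi)$ by definition, so $T$ restricts to $\mu$ on test functions. It remains to verify continuity on $\cS (\RR^d)$.

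For continuity, I would copy the estimate already carried out in the proof of Lemma~\ref{lem:implications}, which gives
\[
  \lvert T (\psi) \rvert \, \leqslant \int^{}_{\RR^d} \lvert \psi (x) \rvert \dd \lvert \mu \rvert (x)
  \, \leqslant \, \big\| \bigl( 1 + \lvert P \rvert \bigr) \ts \psi \big\|_{\infty} \, C
\]
for every $\psi \in \cS (\RR^d)$. Since $1 + \lvert P (x) \rvert$ is dominated by a polynomial in $\lvert x^{}_{1} \rvert, \ldots, \lvert x^{}_{d} \rvert$ with non-negative coefficients, the supremum $\| (1 + \lvert P \rvert) \ts \psi \|_{\infty}$ is bounded by a finite linear combination of the Schwartz seminorms $\| \psi \|^{}_{0, \beta}$, with $\beta$ ranging over the multi-indices occurring in $P$. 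This shows $T$ is continuous in the Schwartz topology, so $T \in \cS' (\RR^d)$.

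There is no real obstacle here, since all the analytic content is already present in Proposition~\ref{prop:L-one} and Theorem~\ref{thm:relations}; the only point worth emphasising is that one must pass from the a priori weak hypothesis (mere integrability of Schwartz functions against $\lvert \mu \rvert$) to the quantitative bound via slow increase, and this passage goes through the nontrivial equivalence $(4) \Leftrightarrow (1)$ of Theorem~\ref{thm:relations}.
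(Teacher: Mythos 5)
Your proposal is correct and takes essentially the same route as the paper: the corollary is stated by the authors as nothing more than a restatement of the implication $(4)\Rightarrow(5)$ of Theorem~\ref{thm:relations}, and your argument simply unpacks that chain (through slow increase via Proposition~\ref{prop:L-one}, then the continuity estimate from Lemma~\ref{lem:implications}, made explicit by bounding $\| (1+\lvert P\rvert)\psi\|_{\infty}$ by a finite combination of seminorms $\|\psi\|_{0,\beta}$). This is a more verbose but faithful rendering of exactly what the paper's \qed is pointing at.
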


Let us complete the section by another characterisation
when a measure is slowly increasing.

\begin{lemma}\label{lem:linear}
   A Radon measure\/ $\mu$ is slowly increasing if and only if
   it is a linear combination of positive tempered measures.
\end{lemma}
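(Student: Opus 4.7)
The plan is to prove both directions by reducing to the positive case via the Jordan decomposition for complex measures and invoking Theorem~\ref{thm:positive}, which identifies ``slowly increasing'' with ``tempered'' for positive Radon measures.

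For the ``only if'' direction, I would decompose $\mu$ into its four positive Jordan components, $\mu = \mu^{}_1 - \mu^{}_2 + \ii\ts \mu^{}_3 - \ii\ts \mu^{}_4$ with each $\mu^{}_j \leqslant \lvert \mu \rvert$. If $\mu$ is slowly increasing with witness polynomial $P$, the domination yields
\[
  \int \frac{\dd \mu^{}_j (x)}{1+\lvert P(x) \rvert} \, \leqslant \int \frac{\dd \lvert \mu \rvert (x)}{1+\lvert P(x) \rvert} \, < \, \infty
\]
for each $j$, so every $\mu^{}_j$ is slowly increasing and hence positive tempered by Theorem~\ref{thm:positive}. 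This exhibits $\mu$ as the required linear combination.

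For the ``if'' direction, suppose $\mu = \sum_{j=1}^{k} c^{}_j \ts \mu^{}_j$ with each $\mu^{}_j$ positive and tempered, so by Theorem~\ref{thm:positive} each $\mu^{}_j$ is slowly increasing with some witness polynomial $P^{}_j$. The central step is to find a single polynomial that works for all $j$ simultaneously: choose $N \in \NN$ large enough that every $\lvert P^{}_j(x) \rvert$ is dominated by $(1 + \lvert x \rvert^{2}_{2})^N$, and set $P(x) = (1 + \lvert x \rvert^{2}_{2})^N$. Then there exist constants $c'_j > 0$ with $1 + \lvert P(x) \rvert \geqslant c'_j \bigl( 1 + \lvert P^{}_j(x) \rvert \bigr)$, which gives $\int \dd \mu^{}_j /(1+\lvert P \rvert) < \infty$ for every $j$. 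Combining this with the standard total-variation bound $\lvert \mu \rvert \leqslant \sum_{j} \lvert c^{}_j \rvert \ts \mu^{}_j$, a consequence of the triangle inequality over finite partitions, one obtains $\int \dd \lvert \mu \rvert /(1+\lvert P \rvert) < \infty$, so $\mu$ is slowly increasing. No step here presents a serious obstacle; the only mild subtlety is the uniform majorisation of the finitely many $P^{}_j$ by a common polynomial, which is routine.
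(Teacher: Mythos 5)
Your proposal is correct and follows essentially the same route as the paper: the forward direction uses the Hahn--Jordan decomposition of $\real(\mu)$ and $\imag(\mu)$ together with the domination $\mu^{}_j \leqslant \lvert\mu\rvert$, exactly as in the paper. For the reverse direction you spell out the common dominating polynomial and the total-variation bound $\lvert\mu\rvert \leqslant \sum_j \lvert c^{}_j\rvert\ts \mu^{}_j$, where the paper simply calls this direction obvious; your elaboration is routine but sound.
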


\begin{proof}
  The direction $\Leftarrow$ is obvious. For the converse, consider
  the standard Hahn--Jordan decomposition \cite[Cor.~3.6]{Lang} of
  $\mu = \nu + \ii \sigma$ with $\nu = \real (\mu)$ and
  $\sigma = \imag (\mu)$, that is
\[
    \mu \, = \, \bigl(\nu^{}_{+} \nts - \nu^{}_{-}\bigr)
    + \ii \bigl(\sigma^{}_{+} \nts - \sigma^{}_{-} \bigr) ,
\]
where
$\nu^{}_{\pm} = \frac{1}{2} \bigl( \lvert \nu \rvert \pm \nu \bigr)$
are supported on disjoint sets, and analogously for $\sigma^{}_{\pm}$.
Then, one has
\[
  \lvert \varrho \rvert \, \leqslant \, \lvert \mu \rvert
  \quad \text{for all } \varrho \in \bigl\{
  \nu^{}_{+}, \nu^{}_{-}, \sigma^{}_{+}, \sigma^{}_{-} \bigr\} .
\]
Since $\mu$ is slowly increasing, so are the four components.
\end{proof}

Since the Example in \cite[Prop.~7.1]{ARMA1} is so crucial, but
important details are skipped, let us next construct some examples of
that type more explicitly.

\section{Some tempered measures that are not slowly
increasing}\label{sec:counter}

All the examples in this section fall into the class of tempered
distributions with Fourier transforms in the sense of measures, which
was studied in detail in \cite{ST}. Let us begin with an important
technical step, which we present in all details for clarity and
self-containedness.

\begin{prop}\label{prop:help}
  For every\/ $A>0$, there exists a function\/
  $g\in C_{\mathsf{c}} (\RR)$ with the following three properties:
  $\,\supp (g) \subseteq [-2,2]$, $\| g \|^{}_{1} \geqslant A$, and\/
  $\| \widehat{g} \ts \|^{}_{\infty} \leqslant 1$.
\end{prop}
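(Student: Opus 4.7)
The plan is to exploit the classical Dirichlet-kernel phenomenon: while $D^{}_{N}(x) = \sum_{k=-N}^{N} \ee^{2\pi\ii k x}$ has spectrum of size $2N+1$, its $L^1$-norm over a period grows only like $\log N$. Localising $D^{}_{N}$ by a smooth compactly supported cutoff and then rescaling will deliver a function with $\|\widehat{g}\|^{}_{\infty}\leqslant 1$ yet $\|g\|^{}_{1}$ as large as desired. A simple scaling at the end will reduce the whole task to controlling the quotient $\|g^{}_{N}\|^{}_{1} / \|\widehat{g^{}_{N}}\|^{}_{\infty}$.

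Concretely, I would first fix a non-negative $\phi \in C^{\infty}_{\mathsf{c}}(\RR)$ with $\supp(\phi) \subseteq [-1,1]$ and $\phi \equiv 1$ on $[-1/2,1/2]$. For each $N \in \NN$, set
\[
  g^{}_{N}(x) \, \defeq \, \phi(x) \sum_{k=-N}^{N} \ee^{2\pi\ii k x} ,
\]
so that $g^{}_{N} \in C^{\infty}_{\mathsf{c}}(\RR) \subseteq C^{}_{\mathsf{c}}(\RR)$ with support in $[-1,1]\subset [-2,2]$. A direct modulation calculation gives $\widehat{g^{}_{N}}(\xi) = \sum_{k=-N}^{N} \widehat{\phi}(\xi-k)$. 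Since $\widehat{\phi}\in\cS(\RR)$, the full sum $\sum_{k\in\mathbb{Z}} |\widehat{\phi}(\xi-k)|$ converges uniformly in $\xi$ to a bounded, continuous, $1$-periodic function; call its supremum $C^{}_{\phi}$. Consequently, $\|\widehat{g^{}_{N}}\|^{}_{\infty} \leqslant C^{}_{\phi}$ independently of $N$.

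On the $L^1$ side, the classical lower bound $\int_{-1/2}^{1/2} |D^{}_{N}(x)| \dd x \geqslant c \log N$ (with some $c>0$ independent of $N$) and the fact that $\phi \equiv 1$ on $[-1/2,1/2]$ give $\|g^{}_{N}\|^{}_{1} \geqslant c \log N \to \infty$. Choosing $N$ with $c \log N \geqslant A \ts C^{}_{\phi}$ and setting $g \defeq g^{}_{N}/C^{}_{\phi}$ then yields a function with support in $[-2,2]$, with $\|\widehat{g}\|^{}_{\infty} \leqslant 1$, and with $\|g\|^{}_{1} \geqslant A$, as required.

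The main technical step is the uniform Fourier bound: one must argue explicitly that $\sum_{k\in\mathbb{Z}} |\widehat{\phi}(\xi-k)|$ is bounded in $\xi$, which is routine from Schwartz decay (e.g.\ split the sum into the $O(1)$ terms with $|\xi-k| \leqslant 1$ and the tails controlled by $|\widehat{\phi}(\xi-k)| \leqslant C/(1+|\xi-k|^{2})$), but should be spelt out. The growth $\|D^{}_{N}\|^{}_{1} \geqslant c \log N$ is classical, via the standard estimate that compares $\int_{-1/2}^{1/2} |\sin((2N+1)\pi x)|/|\sin(\pi x)| \dd x$ to a partial harmonic sum.
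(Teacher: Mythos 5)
Your argument is correct, but it takes a genuinely different route from the paper's. The paper starts from the non-integrability of the $\sinc$ function: it takes $f = 1_{[-1,1]}$, convolves with mollifiers $f^{}_{n}$, uses dominated convergence to show that $\bigl\lVert\widehat{f^{}_{n}*f}\bigr\rVert^{}_{1}$ can be made as large as desired, rescales $h\defeq\widehat{f^{}_{n}*f}$ so that its $L^1$ mass concentrates on $[-1,1]$, and then cuts off with a bump $\varphi$; the sup-norm bound on $\widehat{g}$ comes from Young's inequality, $\lVert\widehat{\varphi}*\widehat{h^{}_{1}}\rVert^{}_{\infty}\leqslant\lVert\widehat{\varphi}\rVert^{}_{1}\lVert\widehat{h^{}_{1}}\rVert^{}_{\infty}$, together with $\lVert\widehat{h^{}_{1}}\rVert^{}_{\infty}=\lVert f^{}_{n}*f\rVert^{}_{\infty}\leqslant 1$. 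You instead start from the Dirichlet kernel $D^{}_{N}$, cut off directly with the bump $\phi$, and bound $\widehat{g^{}_{N}}=\sum_{k=-N}^{N}\widehat{\phi}(\cdot-k)$ by exploiting the integer spacing of the frequencies together with Schwartz decay of $\widehat{\phi}$ — note that Young's inequality alone would not work here, since the measure $\sum_{k=-N}^{N}\delta^{}_{k}$ has total variation $2N{+}1$, so the uniform bound on $\sum_{k}\lvert\widehat{\phi}(\xi-k)\rvert$ really is the key step, and you correctly flag it as the point that needs to be spelt out. Both proofs rest on the same underlying phenomenon (an oscillating kernel with $L^1$ norm growing without bound yet with Fourier transform bounded in sup norm), but the paper's version is a one-parameter deformation argument with a DCT limit, whereas yours is a direct trigonometric-series construction whose only nontrivial input is the classical Lebesgue-constant lower bound $\int_{-1/2}^{1/2}\lvert D^{}_{N}\rvert \geqslant c\log N$. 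Your version is more explicit and avoids the limiting argument; the paper's is more self-contained in that it derives the divergence of $\lVert\sinc\rVert^{}_{1}$ from scratch rather than citing a named estimate. Either would be acceptable.
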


\begin{proof}
  Define the non-negative functions $f = 1^{}_{[-1,1]}$ and
  $f^{}_{n} = \frac{n}{2} \, 1_{[-\frac{1}{n}, \frac{1}{n}]}$, the
  latter for $n\in\NN$. Clearly, for any $x\in\RR$, one has
\[
  0 \, \leqslant \, \bigl( f * f^{}_{n} \bigr) (x) \, =
  \int_{-1}^{1} f^{}_{n} (x-y) \dd y \, \leqslant
  \int_{\RR} f^{}_{n} (x-y) \dd y \, = \, 1 \ts ,
\]
which implies $\| f * f^{}_{n} \ts \|^{}_{\infty} \leqslant 1$.  Also,
since $\widehat{f^{}_{n}} (t) = \sinc
\bigl(\nts \frac{\ts 2 \pi t \ts}{n}\nts \bigr)$
with $\sinc (z) = \frac{\sin (z)}{z}$, one clearly has pointwise
convergence $\widehat{f^{}_{n}} \xrightarrow{n\to\infty} 1$ on $\RR$.

For any $B>0$, there is an $n\in \NN$ such that
\begin{equation}\label{eq:B-est}
    \big\| \widehat{f^{}_{n} \nts * \nts\nts f} \big\|_{1} \, = \,
    \big\| \widehat{f^{}_{n} } \ts \widehat{f} \, \big\|_{1}
    \, \geqslant \, B \ts .
\end{equation}
To see this, observe that $\widehat{f} (t) = 2 \sinc (2 \pi t )$ is
locally integrable, but gives
$\big\| \widehat{f}\, \big\|_{1} = \infty$ because, for any $N\in\NN$,
one has
\[
  \int_{0}^{\infty} \lvert \sinc (2 \pi t) \rvert \dd t \, \geqslant
  \sum_{n=0}^{N} \int_{\frac{n}{2}}^{\frac{n+1}{2}}
  \frac{\lvert \sin (2 \pi t) \rvert}{2 \pi t} \dd t
  \, \geqslant \sum_{n=0}^{N} \myfrac{1}{\pi (n{+}1)}
  \int_{\frac{n}{2}}^{\frac{n{+}1}{2}} \lvert \sin (2 \pi t) \rvert \dd
  t \, = \, \myfrac{1}{\pi^2} \sum_{n=0}^{N} \myfrac{1}{n{+}1}
 \]
 where the last term is the divergent harmonic series.
 In particular, we have
 \[
      \int_{-\alpha}^{\alpha} \big| \widehat{f} (t) \big|
      \dd t \, > \, 2 B
 \]
for a suitable $\alpha>0$.

Now, recall that
$\widehat{f} (t)\ts \widehat{f^{}_{n}} (t) \xrightarrow{n\to\infty}
\widehat{f} (t)$ holds for any $t\in\RR$, where we also have
$\bigl| \widehat{f^{}_{n}} \bigr| \leqslant 1$. Thus, we have
\[
     \big| 1^{}_{[-\alpha,\alpha]}
     \widehat{f^{}_{n}} \ts \widehat{f} \, \big|
     \,\xrightarrow[n\to\infty]{\,\text{pointwise}\,}\,
     \big| 1^{}_{[-\alpha,\alpha]} \widehat{f} \, \big|
\]
where
$\big| 1^{}_{[-\alpha,\alpha]} \widehat{f^{}_{n}} \ts \widehat{f} \,
\big| $ is dominated by
$1^{}_{[-\alpha,\alpha]} \ts \big| \widehat{f}\, \big| \in L^{1}
(\RR)$. By the dominated convergence theorem
\cite[Thm.~5.8]{Lang}, we thus get
\[
   \lim_{n\to\infty} \int_{-\alpha}^{\alpha}
   \big| \widehat{f^{}_{n}} (t) \ts \widehat{f} (t) \big|
   \dd t \, = \int_{-\alpha}^{\alpha}
   \big| \widehat{f} (t) \big| \dd t \, > \, 2 B \ts.
\]
Consequently, there exists an $n\in\NN$ with
$\int_{-\alpha}^{\alpha} \big| \widehat{f^{}_{n}} (t) \widehat{f} (t)
\big| \dd t > B$, which implies
$\big\| \widehat{f^{}_{n}} \ts \widehat{f} \, \big\|_{1} > B$ and thus
\eqref{eq:B-est}.

Next, let $A>0$ be fixed, and $C>0$ some number that we shall specify
later. If we choose some $B>AC$, there exists an $n\in\NN$ such that
$h = \widehat{f^{}_{n}}\ts \widehat{f}$ satisfies
$\| h \|^{}_{1} \geqslant B > AC$. Then, for a suitable $a > 0$,
we have $\int_{-a}^{a} \lvert h (t) \rvert \dd t > AC$. With
$h^{}_{1} (t) \defeq a \ts h (a t)$, we get
\[
     \int_{-1}^{1}  \lvert h^{}_{1} (t) \rvert \dd t \, =
     \int_{-a}^{a} \lvert h (u) \rvert \dd u
     \, > \, AC \ts ,
\]
together with
$\widehat{h^{}_{1}} (t) = \widehat{h} ( t / a ) = \bigl( f^{}_{n}
* \nts f \bigr) (- t/a)$, which also gives
$\big\| \widehat{h^{}_{1} } \big\|_{\infty} \leqslant 1$.

Fix some $\varphi \in C^{\infty}_{\mathsf{c}} (\RR)$ with
$\varphi \equiv 1$ on $[-1,1]$ and $\supp (\varphi) \subseteq [-2,2]$,
and set $C \defeq \| \widehat{\varphi} \ts\|^{}_{1}$, which clearly
satisfies $C < \infty$. Now, set
$g \defeq C^{-1} \varphi \ts\ts h^{}_{1}$, where
$g \in C_{\mathsf{c}} (\RR)$ is clear. Then,
\[
   \| g \|^{}_{1} \, = \, \myfrac{1}{C} \int_{\RR}
   \lvert \varphi (t) \, h^{}_{1} (t) \rvert \dd t \, \geqslant\,
   \myfrac{1}{C} \int_{-1}^{1} \lvert \varphi (t) \, h^{}_{1} (t)
   \rvert \dd t \, = \, \myfrac{1}{C} \int_{-1}^{1}
   \lvert h^{}_{1} (t) \rvert \dd t \, > \,  A \ts .
\]
This shows $\| g \|^{}_{1} \geqslant A$, and we also have
$\supp (g) \subseteq \supp (\varphi) \subseteq [-2,2]$.

Finally, we have
\[
  \| \widehat{g} \ts \|^{}_{\infty} \, = \, C^{-1} \big\|
  \widehat{\varphi \cdot h^{}_{1}} \big\|_{\infty} \, = \, C^{-1}
  \big\| \widehat{\varphi} *\nts \widehat{h^{}_{1}} \big\|_{\infty} \,
  \leqslant \, C^{-1} \big\| \widehat{\varphi}\ts \big\|_{1} \big\|
  \widehat{h^{}_{1}} \big\|_{\infty} \, = \, \big\| \widehat{h^{}_{1}}
  \big\|_{\infty} \, \leqslant \, 1 \ts ,
\]
which proves the claim.
\end{proof}

\begin{remark}
  It is important to note that the function $g$, once $A$ is large
  enough, cannot be a positive function. If it were, we would get
  $\widehat{g} (0) = \| g \|^{}_{1} > A$, in contradiction to
  $\| \widehat{g} \|^{}_{\infty} \leqslant 1$. The analogous comment
  also applies to the function $h^{}_{1}$ constructed in the proof.
  \exend
\end{remark}

This has the following consequence, which is also part of
\cite[Prop.~7.1]{ARMA1}.

\begin{coro}\label{coro:gn}
  There is a sequence\/ $(g^{}_{n})^{}_{n\in\NN}$ of functions\/
  $g^{}_{n} \in C_{\mathsf{c}} (\RR)$ with the following three
  properties:
  $\, \supp (g^{}_{n}) \subseteq \bigl[ -\frac{1}{n+1}, \frac{1}{n+1}
  \bigr]$, $\| g^{}_{n} \|^{}_{1} \geqslant (n^2+1)^{n}$, and\/
  $\| \widehat{\ts g^{}_{n}} \|^{}_{\infty} \leqslant 2^{-n}$.
\end{coro}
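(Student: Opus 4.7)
The plan is to obtain each $g_n$ by rescaling the function produced by Proposition~\ref{prop:help} with a sufficiently large parameter $A$. The rescaling should compress the support into $[-\tfrac{1}{n+1},\tfrac{1}{n+1}]$, while a corresponding amplitude factor is chosen to keep $\|\widehat{g_n}\|_\infty$ below $2^{-n}$; the $L^1$-norm then scales in a predictable way, so choosing $A$ large enough will guarantee the lower bound $(n^2+1)^n$.

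Concretely, for fixed $n$, I would take the function $g$ furnished by Proposition~\ref{prop:help} for a value $A_n$ to be specified, so that $\supp(g)\subseteq[-2,2]$, $\|g\|_1\geqslant A_n$, and $\|\widehat{g}\|_\infty\leqslant 1$. Then I would define
\[
   g^{}_n (x) \, \defeq \, \lambda^{}_n \, g (\mu^{}_n \ts x)
\]
with $\mu^{}_n = 2(n+1)$, which ensures $\supp(g^{}_n)\subseteq[-\tfrac{1}{n+1},\tfrac{1}{n+1}]$. Standard rescaling identities give $\|g^{}_n\|_1 = (\lambda^{}_n/\mu^{}_n)\|g\|_1$ and $\widehat{g^{}_n}(t) = (\lambda^{}_n/\mu^{}_n)\widehat{g}(t/\mu^{}_n)$, hence $\|\widehat{g^{}_n}\|_\infty = (\lambda^{}_n/\mu^{}_n)\|\widehat{g}\|_\infty$. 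Choosing $\lambda^{}_n/\mu^{}_n = 2^{-n}$ — for instance $\lambda^{}_n = 2^{-n+1}(n+1)$ — yields $\|\widehat{g^{}_n}\|_\infty \leqslant 2^{-n}$ immediately.

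For the $L^1$-bound, one needs $2^{-n}\|g\|_1 \geqslant (n^2+1)^n$, so it suffices to apply Proposition~\ref{prop:help} with $A^{}_n \defeq 2^n(n^2+1)^n$; then $\|g^{}_n\|_1 \geqslant 2^{-n}A^{}_n = (n^2+1)^n$ as required. All three properties are then satisfied, and the sequence $(g^{}_n)^{}_{n\in\NN}$ has the desired form.

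There is no real obstacle here: the entire content is packaged into Proposition~\ref{prop:help}, and the corollary is a matter of bookkeeping the dilation and amplitude constants. The only mild care needed is to keep straight the directions in which each of the three norms scales, so that a single pair $(\lambda^{}_n,\mu^{}_n)$ simultaneously controls the support, the Fourier sup-norm, and (via the choice of $A^{}_n$) the $L^1$-norm.
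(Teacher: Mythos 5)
Your proposal is correct and is essentially identical to the paper's proof: both rescale the function $g$ from Proposition~\ref{prop:help} as $g^{}_n(x)=\lambda^{}_n\ts g(\mu^{}_n x)$ with $\mu^{}_n=2(n+1)$, arrange $\lambda^{}_n/\mu^{}_n=2^{-n}$ to control the Fourier sup-norm, and then choose $A^{}_n=2^n(n^2+1)^n$ to secure the $L^1$ lower bound (the paper writes $h^{}_{\beta,\gamma}(x)=\frac{1}{\beta}g(\gamma x)$, so $\lambda^{}_n=1/\beta$ and $\mu^{}_n=\gamma$, with the same numerical choices).
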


\begin{proof}
  Let $A>0$ and $g$ be as in Proposition~\ref{prop:help}, and let
  $\beta,\gamma > 0$ be arbitrary. If we set
  $h^{}_{\beta,\gamma} (x) \defeq \frac{1}{\beta} \, g (\gamma x)$, we
  get
  $\supp (h^{}_{\beta,\gamma} ) \subseteq \bigl[ -\frac{2}{\gamma} ,
  \frac{2}{\gamma} \bigr]$ together with
\[
   \big\| h^{}_{\beta,\gamma} \big\|_{1} \, = \, \beta^{-1} \!\int_{\RR}
   \, \lvert g (\gamma x )\rvert \dd x \, = \, \frac{\| g \|^{}_{1} }
   {\beta\gamma} \, \geqslant \, \myfrac{A}{\beta\gamma}
\]
and
$\widehat{h^{}_{\beta,\gamma}} (t) = \frac{1}{\beta\gamma} \,
\widehat{g} ( t/\gamma ) $, hence
$\big\| \widehat{h^{}_{\beta,\gamma}} \big\|_{\infty} \leqslant
\frac{1}{\beta\gamma}$.  Choosing $\gamma = 2 (n+1)$ and
$\beta = 2^{n-1}/(n+1)$, the claim follows with $g^{}_{n} =
h^{}_{\beta,\gamma}$ and $A = 2^n (n^2 + 1)^{n}$.
\end{proof}

Let us next prove a result that is the key to go beyond the example of
\cite[Prop.~7.1]{ARMA1}. First, let us recall that a sequence
$(\mu^{}_{n})^{}_{n\in\NN}$ of measures is said to \emph{converge
  vaguely} to a measure $\mu$ if, for all $\varphi \in \Cc(\RR^d)$, we
have
$\mu^{}_{n}(\varphi) \xrightarrow{\, n\to\infty \,} \mu(\varphi)$.

\begin{lemma}\label{lem:temp-not-slw}
  Let\/ $A>0$ and let\/ $(\nu^{}_{n})^{}_{n\in\NN}$ be a sequence of
  finite measures on\/ $\RR^d$ with the following three properties:
  $\, \supp (\nu^{}_{n}) \subseteq [-A, A ]^d$,
  $\; \lvert \nu^{}_{n} \rvert (\RR^d) \geqslant (n^2+1)^{n}$, and\/
  $\, \| \widehat{\ts \nu^{}_{n}} \|^{}_{\infty} \leqslant 2^{-n}$.
  Set\/ $v = ( 4 \ts A , 0, \ldots, 0)$ and define\/
\[
   \mu^{}_n \, =
   \sum_{m=1}^n \delta^{}_{m v} \nts* \nu^{}_m \ts .
\]
Then, $(\mu^{}_{n})^{}_{n\in\NN}$ converges vaguely to a measure\/
$\mu$ that is tempered but not slowly increasing.
\end{lemma}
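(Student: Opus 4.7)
The plan is to carry out three steps: identify the vague limit explicitly, establish temperedness via Fourier analysis, and rule out slow increase using the mass lower bound. The key obstacle will be the second step, where one must certify that the distributional limit of the $\mu^{}_n$ is genuinely an element of $\cS'(\RR^d)$ and agrees with $\mu$ on $C^{\infty}_{\mathsf{c}}(\RR^d)$.

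For the first step, observe that $\supp\bigl(\delta^{}_{mv}\nts*\nu^{}_{m}\bigr) \subseteq mv + [-A,A]^d$, and the first coordinates of these translated boxes lie in $[4Am-A,\, 4Am+A]$. Hence the supports are pairwise disjoint with gaps of at least $2A$, and for any $\varphi \in C^{}_{\mathsf{c}}(\RR^d)$, only finitely many summands of $\sum_m \delta^{}_{mv}\nts*\nu^{}_{m}$ are non-zero on $\supp(\varphi)$. In particular, $\mu^{}_{n}(\varphi)$ is eventually constant, so $\mu^{}_{n}$ converges vaguely to $\mu \defeq \sum_{m=1}^{\infty} \delta^{}_{mv}\nts*\nu^{}_{m}$, and the same disjointness yields $\lvert\mu\rvert = \sum_{m=1}^{\infty} \delta^{}_{mv}\nts*\lvert\nu^{}_{m}\rvert$ as Radon measures.

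For the second step, each finite measure $\nu^{}_{m}$ has a continuous bounded Fourier transform $\widehat{\nu^{}_{m}}$, and $\widehat{\delta^{}_{mv}\nts*\nu^{}_{m}}(\xi) = \ee^{-2\pi\ii \ts mv\cdot\xi}\,\widehat{\nu^{}_{m}}(\xi)$ also satisfies $\| \, \cdot \, \|^{}_{\infty}\leqslant 2^{-m}$. Hence the partial sums $\widehat{\mu^{}_{n}}$ converge uniformly to a continuous bounded function $h$ with $\| h \|^{}_{\infty}\leqslant 1$. Since $h \in L^{\infty}(\RR^d) \subset \cS'(\RR^d)$, its inverse Fourier transform is a tempered distribution $T$. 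Uniform convergence $\widehat{\mu^{}_{n}}\to h$ implies convergence in $\cS'(\RR^d)$, so $\mu^{}_{n}\to T$ in $\cS'(\RR^d)$; combined with the eventual constancy from step one, this forces $T(\varphi) = \mu(\varphi)$ for all $\varphi \in C^{\infty}_{\mathsf{c}}(\RR^d)$, meaning $\mu$ is tempered in the sense of Definition~\ref{def:temp}.

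To rule out slow increase, suppose to the contrary that $\int \dd\lvert\mu\rvert/(1+\lvert P\rvert) < \infty$ for some polynomial $P$ of degree $k$. Since $\lvert P(x)\rvert \leqslant C^{}_{P}(1+m)^{k}$ for $x \in mv + [-A,A]^d$ and $m$ sufficiently large, one obtains
\[
   \int^{}_{\RR^d} \frac{\dd\lvert\mu\rvert(x)}{1+\lvert P(x)\rvert}
   \, \geqslant \sum_{m=1}^{\infty} \frac{\lvert\nu^{}_{m}\rvert(\RR^d)}
   {1 + C^{}_{P}(1+m)^{k}} \, \geqslant \sum_{m=1}^{\infty}
   \frac{(m^2+1)^{m}}{1+C^{}_{P}(1+m)^{k}} \ts ,
\]
which diverges because the numerator grows faster than any polynomial in $m$, delivering the desired contradiction.
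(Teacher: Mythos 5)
Your proof is correct and follows essentially the same route as the paper: eventual constancy of $\mu_n(\varphi)$ via the $2A$-gaps between translated supports, uniform convergence of $\widehat{\mu_n}$ to a bounded continuous $h$ and the identification of the vague limit with the inverse Fourier transform of $h$ in $\cS'(\RR^d)$, and finally the $(n^2+1)^n$ mass lower bound against any fixed polynomial. The only cosmetic difference is that the paper routes the temperedness step through the intermediate distributions $T_n(\psi)=\int\psi(-t)H_n(t)\,\mathrm{d}t$ and cites a transformability lemma, while you invoke Fourier inversion and continuity of $\widehat{\phantom{x}}^{-1}$ on $\cS'$ directly; and the paper argues non-slow-increase by letting a single term tend to infinity, whereas you sum over all $m$.
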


\begin{proof}
  Let $C_{\mathsf{u}} (\RR^d)$ denote the space of bounded, uniformly
  continuous functions on $\RR^d$ and consider
  $H^{}_{n}\nts = \widehat{\mu^{}_{n}}$ as defined by
\[
   \widehat{\mu^{}_{n}} (x) \, =
  \sum_{m=1}^{n} \ee^{-2 \pi \ii \ts m \ts v \cdot x } \,
  \widehat{\nu^{}_{m}} (x) \ts ,
\]
where ${v \cdot x} = 4\ts A \ts x^{}_{1}$,
which clearly satisfies $H_n \in C_{\mathsf{u}} (\RR^d)$. As
$\| \widehat{\nu^{}_{n}} \|^{}_{\infty} < 2^{-n}$, the sequence
$ ( H^{}_n )^{}_{n\in\NN}$ converges, in
$\bigl( C_{\mathsf{u}} (\RR^d), \| . \|^{}_{\infty} \bigr)$, to some
$H \in C_{\mathsf{u}} (\RR^d)$.

To continue, it follows immediately from the definition of
$\mu^{}_{n}$ that, for any $\varphi \in C_{\mathsf{c}} (\RR^d)$, there
is an integer $N=N(\varphi)$ such that
$\mu^{}_{N} (\varphi) = \mu^{}_{N+k} (\varphi)$ holds for all
$k\geqslant 1$. Consequently, $(\mu^{}_{n})^{}_{n\in\NN}$ is vaguely
Cauchy, and hence vaguely convergent so some Radon measure $\mu$ on
$\RR^d$. Moreover, for any $\varphi \in C_{\mathsf{c}} (\RR^d)$, one
has
\[
  \mu (\varphi) \, = \, \mu^{}_{n} (\varphi)
  \qquad \text{for all } n \geqslant N (\varphi) \ts ,
\]
with the $N (\varphi)$ from above. Next,
\[
  T_n (\psi) \, \defeq \int_{\RR^d} \psi (-t) \,
  H_{n} (t) \dd t  \quad \text{and} \quad
  T (\psi) \, \defeq \int_{\RR^d} \psi (-t) \, H (t) \dd t
\]
define tempered distributions, with $T_n \xrightarrow{n\to\infty} T$
in $\cS' (\RR)$.

Further, by \cite[Lemma~4.9.14]{MoSt} and \cite[Prop.~3.1]{ARMA1}, we
have
\[
  \mu^{}_n(\varphi)\, = \int_{\RR^d} \widecheck{\varphi}(t) \dd
  \widehat{\mu^{}_n}(t) \, = \, T^{}_n (\widehat{\varphi}\ts )
\]
for all $\varphi \in \Cc^{\infty}(\RR^d)$.  This gives
\[
  \widehat{T} (\varphi) \, = \, T (\widehat{\varphi} \ts )
  \, = \lim_{n\to\infty} T_n (\widehat{\varphi}\ts ) \, =
  \lim_{n\to\infty} \mu_{n} (\varphi) \, = \, \mu (\varphi) \ts ,
\]
which shows that $\mu$ is indeed tempered.

Finally, let $n\geqslant 3$ and let
  $\varphi \in C_{\mathsf{c}} (\RR^d)$ satisfy
  $\supp (\varphi) \subseteq n v + \bigl[ -A, A
  \bigr]^d$. Then, due to our construction, we have
\[
  \mu^{}_{m} (\varphi) \, = \int_{\RR^d} \varphi (x) \, \dd
  \bigl( \delta^{}_{n v} \! * \nu^{}_n \bigr) (x)
  \qquad \text{for all } m>n \ts ,
\]
and thus also
$\mu (\varphi) = \int_{\RR^d} \varphi (x) \dd \bigl( \delta^{}_{nv}
 \! * \nu^{}_n \bigr) (x)$. This shows that
\[
  \mu\big|_{n v + [ -A, A ]^d} \, = \,
  \delta^{}_{ n v} \! * \nu^{}_n \ts .
\]
In particular, for any fixed $P \in \RR [x^{}_1,\ldots, x^{}_d]$, we
get
\[
  \int_{\RR^d} \frac{\dd\ts \lvert \mu \rvert (x)} {1 + \lvert P(x)\rvert}
  \, \geqslant \int^{}_{\left[ -A, A\right]^d}
  \frac{ \dd \ts\lvert \nu^{}_n  \rvert (x^{}_1,\ldots, x^{}_d)}
  {1 + \lvert  P(x^{}_1 -4 A n, x^{}_2, \ldots, x^{}_d)
    \rvert} \, \xrightarrow{\, n\to\infty\,} \, \infty \ts ,
\]
where the last claim follows immediately from
$\left| \nu^{}_{n} \right| \bigl( [-A, A ]^d \ts \bigr) \geqslant
(n^2+1)^{n}$.  This shows that $\mu$ cannot be slowly increasing.
\end{proof}

\begin{remark}\label{rem:supp-mu}
  By construction, the measures $\mu^{}_n$ and $ \mu$ from
  Lemma~\ref{lem:temp-not-slw} satisfy the following simple
  property. For each compact set $K \subset \RR^d$, there
  exists some integer $N=N(K)$ such that, for all $n >N$, we
  have $ \mu |^{}_{K}  =     \mu^{}_{n} |^{}_{K}$, where
  $\mu |^{}_{K}$ denotes the restriction of $\mu$ to $K$.
  \exend
\end{remark}

Now, setting $\nu^{}_n = g^{}_n \lambda^{}_{\mathrm{L}}$ with $g^{}_n$
as in Corollary~\ref{coro:gn} and $\lambda^{}_{\mathrm{L}}$ denoting
Lebesgue measure, Proposition~\ref{lem:temp-not-slw} gives the
following concrete version of \cite[Prop.~7.1]{ARMA1}.

\begin{prop}\label{prop:no-way}
  Let\/ $g^{}_{n}$ be as in Corollary~\textnormal{\ref{coro:gn}} and
  consider the measures defined by
\[
  \mu^{}_n (\varphi)\, \defeq \sum_{j=1}^n
  \int^{}_{\RR} \varphi(x) \, g^{}_j(x+j) \, \dd x \ts .
\]
Then, the sequence\/ $(\mu^{}_n )^{}_{n\in\NN}$ converges vaguely to a
signed Radon measure\/ $\mu$ that is tempered but not slowly
increasing.  \qed
\end{prop}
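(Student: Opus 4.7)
The plan is to verify that Proposition~\ref{prop:no-way} is a direct application of Lemma~\ref{lem:temp-not-slw}. First, I would set $\nu^{}_{j} \defeq g^{}_{j} \ts \lambda^{}_{\mathrm{L}}$ with $g^{}_{j}$ from Corollary~\ref{coro:gn}, and verify the three hypotheses of the lemma: the inclusion $\supp (\nu^{}_{j}) \subseteq [-\frac{1}{j+1}, \frac{1}{j+1}]$ fits inside any interval $[-A, A]$ with $A \geqslant \frac{1}{2}$; the total variation equals $\lvert \nu^{}_{j} \rvert (\RR) = \| g^{}_{j} \|^{}_{1} \geqslant (j^{2}+1)^{j}$; and $\| \widehat{\ts \nu^{}_{j}} \|^{}_{\infty} = \| \widehat{\ts g^{}_{j}} \|^{}_{\infty} \leqslant 2^{-j}$, via the standard identification of the Fourier transform of an absolutely continuous finite measure with that of its $L^{1}$-density.

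Next, a change of variables $y = x + j$ rewrites the $j$-th summand of $\mu^{}_{n}$ as $\int^{}_{\RR} \varphi(x) \ts g^{}_{j}(x+j) \dd x = (\delta^{}_{-j} \nts * \nu^{}_{j})(\varphi)$, so that $\mu^{}_{n} = \sum_{j=1}^{n} \delta^{}_{-j} \nts * \nu^{}_{j}$. This has precisely the form of the measures in Lemma~\ref{lem:temp-not-slw}, except that the translation vector is $-1$ rather than the $v = (4 A, 0, \ldots, 0)$ prescribed there. Inspecting the proof of that lemma, however, the role of $v$ is confined to ensuring that the translated supports are pairwise disjoint and march off to infinity. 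In our setting, the supports of $\delta^{}_{-j} \nts * \nu^{}_{j}$ sit inside $[-j - \frac{1}{2}, -j + \frac{1}{2}]$, which are pairwise disjoint and escape to $-\infty$; therefore the argument of the lemma transfers verbatim.

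With these identifications in place, the conclusions of Lemma~\ref{lem:temp-not-slw} --- vague convergence of $(\mu^{}_{n})^{}_{n \in \NN}$ to a Radon measure $\mu$, temperedness of $\mu$, and the failure of the slow-increase condition --- carry over directly. The only mild obstacle is the cosmetic mismatch between translation vectors, which could alternatively be removed by rescaling $g^{}_{j}$ at the outset so as to land exactly inside the lemma's framework; since neither the disjointness of the supports nor the polynomial-beating lower bound $\lvert \nu^{}_{j} \rvert (\RR) \geqslant (j^{2}+1)^{j}$ is affected by the specific step size, the simpler route is to invoke the lemma as it stands.
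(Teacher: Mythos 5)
Your proposal matches the paper's (one-line) proof: both set $\nu^{}_{j} = g^{}_{j}\ts\lambda^{}_{\mathrm{L}}$ and invoke Lemma~\ref{lem:temp-not-slw}. You additionally flag---correctly---that the translation step in Proposition~\ref{prop:no-way} is $-1$ rather than the $v=(4A,0,\ldots,0)$ of the lemma, and observe that this cosmetic mismatch is harmless since the only role of $v$ in the lemma's proof is to keep the translated supports pairwise disjoint and marching off to infinity; the paper glosses over this point.
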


Later, in Example~\ref{ex:final}, we shall provide an example of a
tempered measure with locally finite support that is not slowly
increasing. Before we can do this, we need to discuss the case
of measures with uniformly discrete support more generally.

\section{Radon measures with uniformly discrete
support}\label{sec:finite}

Here, we consider the important special case of measures with
uniformly discrete support, for which the three key notions turn out
to be equivalent. This class is particularly relevant in the theory of
aperiodic order, with several applications to mathematical
quasicrystals and Meyer sets; see
\cite{BST,Jeff-rev,LO1,Meyer,RS,Nicu,NS11} and references therein.

Note first that, if $\mu$ is tempered, strongly tempered, or slowly
increasing, the same property holds for $\overline{\mu}$.  This has
the following immediate consequence.

\begin{fact}\label{fact:two}
   If\/ $\mu$ is a Radon measure on\/ $\RR^d$, one has
\begin{enumerate}\itemsep=2pt
\item  $\mu$ is tempered $\, \Longleftrightarrow\,$
   $\real (\mu)$ and\/ $\imag (\mu)$ are tempered{\ts};
\item $\mu$ is slowly increasing $\, \Longleftrightarrow\,$
   $\real(\mu)$ and\/ $\imag (\mu)$ are slowly increasing. \qed
\end{enumerate}
\end{fact}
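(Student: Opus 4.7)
The plan is to deduce both equivalences from the remark immediately preceding the fact---that complex conjugation preserves each of the three properties---combined with the observation that the relevant classes of measures are closed under complex linear combinations. Given these two ingredients, both implications in both parts follow at once from the identities
\[
  \real (\mu) \, = \, \tfrac{1}{2}\bigl(\mu + \overline{\mu}\bigr), \qquad
  \imag (\mu) \, = \, \tfrac{1}{2\ii}\bigl(\mu - \overline{\mu}\bigr),
\]
together with $\mu = \real(\mu) + \ii \, \imag(\mu)$.

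For (1), I would simply note that tempered measures inherit the vector space structure of $\cS'(\RR^d)$: if $\mu^{}_1$ and $\mu^{}_2$ are tempered with corresponding distributions $T^{}_1, T^{}_2 \in \cS'(\RR^d)$, then $\alpha\ts T^{}_1 + \beta\ts T^{}_2$ is again a tempered distribution agreeing with $\alpha\ts \mu^{}_1 + \beta\ts \mu^{}_2$ on $C^{\infty}_{\mathsf{c}}(\RR^d)$. Together with conjugation invariance, this settles (1) in both directions.

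For (2), the cleanest route is via Lemma~\ref{lem:linear}: a measure is slowly increasing precisely when it is a finite linear combination of positive tempered measures, so this class is manifestly closed under complex linear combinations. A direct argument is also available: if $\mu^{}_1$ and $\mu^{}_2$ are slowly increasing with witnessing polynomials $P^{}_1$ and $P^{}_2$, then $\lvert \mu^{}_1 + \mu^{}_2 \rvert \leqslant \lvert \mu^{}_1 \rvert + \lvert \mu^{}_2 \rvert$, and any polynomial $P$ dominating both $\lvert P^{}_1 \rvert$ and $\lvert P^{}_2 \rvert$ outside a compact set---for instance $P = P^{}_1 P^{}_2$---serves as a witness of slow growth for the sum. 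Either way, no genuine obstacle arises, and the full proof should occupy only a handful of lines; this is presumably why the statement is marked with \textsf{qed} in the excerpt.
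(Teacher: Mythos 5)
Your proof is correct and takes the same route as the paper, which gives no argument beyond the preceding remark that conjugation preserves each of the three properties; the Fact is then immediate from closure under linear combinations and the identities $\real(\mu) = \tfrac{1}{2}(\mu + \overline{\mu})$, $\imag(\mu) = \tfrac{1}{2\ii}(\mu - \overline{\mu})$, $\mu = \real(\mu) + \ii\ts\imag(\mu)$, exactly as you lay out.

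One small slip in the secondary ``direct argument'' you offer for part~(2): the polynomial $P = P^{}_1 P^{}_2$ does not in general dominate both $\lvert P^{}_1\rvert$ and $\lvert P^{}_2\rvert$ outside a compact set --- take $P^{}_1$ constant, or zero, for instance. A choice that always works is $P = P^{}_1\ts\overline{P^{}_1} + P^{}_2\ts\overline{P^{}_2}$, which is a genuine element of $\CC[x^{}_1,\ldots,x^{}_d]$ with nonnegative real values, and satisfies $1 + \lvert P(x)\rvert \geqslant 1 + \lvert P^{}_i(x)\rvert^2 \geqslant \tfrac{1}{2}\bigl(1 + \lvert P^{}_i(x)\rvert\bigr)$ for $i=1,2$. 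Your primary route through Lemma~\ref{lem:linear}, and the general principle you state first (any polynomial eventually dominating both witnesses will do), are both unaffected and correct.
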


To continue, we need a simple separation result as follows.

\begin{lemma}\label{lem:help}
  Let\/ $U,V \subset \RR^d$ be such that\/ $U\cup V$ is uniformly
  discrete and\/ $U\cap V = \varnothing$. Then, there exists a
  function\/ $f \in C^{\infty} (\RR^d)$ such that\/ $f$ and all its
  derivatives are bounded, together with\/ $f(x) = 1$ for all\/
  $x\in U$ and\/ $f(y)=0$ for all\/ $y\in V$.
\end{lemma}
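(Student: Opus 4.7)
The plan is to build $f$ by placing a fixed $C^{\infty}$ bump at each point of $U$. Since $U\cup V$ is uniformly discrete, there exists some $r>0$ such that any two distinct points of $U\cup V$ are at Euclidean distance at least $r$. Fix a non-negative function $\varphi\in C^{\infty}_{\mathsf{c}}(\RR^d)$ with $\varphi(0)=1$ and $\supp(\varphi)\subseteq \{x : |x|^{}_{2} < r/3\}$; such a $\varphi$ is standard. Then define
\[
   f(x) \, \defeq \sum_{u\in U} \varphi(x-u) \ts .
\]

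The key observation is that the translated supports $u+\supp(\varphi)$ for $u\in U$ are pairwise disjoint, because any two points of $U$ are at distance at least $r > 2r/3$. Consequently, for every $x\in\RR^d$, at most one term of the defining sum is non-zero, so the sum is locally finite and $f\in C^{\infty}(\RR^d)$. Moreover, for every multi-index $\alpha$, one has the uniform bound
\[
   \| D^{\alpha} f \|^{}_{\infty} \, \leqslant \, \| D^{\alpha}
   \varphi \|^{}_{\infty} \, < \, \infty \ts ,
\]
so $f$ and all its derivatives are bounded.

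It remains to check the two interpolation conditions. If $u\in U$, then $f(u) = \varphi(0)=1$ because every other translate $\varphi(u-u')$ with $u'\in U\setminus\{u\}$ vanishes, as $|u-u'|^{}_{2}\geqslant r > r/3$. If $v\in V$, then for each $u\in U$ the points $v$ and $u$ are distinct elements of the uniformly discrete set $U\cup V$ (using $U\cap V=\varnothing$), so $|v-u|^{}_{2}\geqslant r > r/3$ and hence $\varphi(v-u)=0$; summing over $u\in U$ gives $f(v)=0$.

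No genuine obstacle arises: the whole argument is driven by the disjointness of the supports, which is immediate from uniform discreteness. The only point to be a little careful with is choosing the support radius of $\varphi$ strictly smaller than $r/2$, so that both the local finiteness at points of $U$ and the vanishing at points of $V$ follow from the same distance estimate.
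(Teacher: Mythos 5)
Your proof is correct and follows essentially the same approach as the paper: pick a small radius guaranteed by uniform discreteness, fix a $C^\infty_{\mathsf{c}}$ bump supported in a correspondingly small ball with value $1$ at the origin, and sum its translates over $U$; the disjointness of the translated supports gives smoothness, uniform bounds on all derivatives, and the interpolation conditions. Your version merely spells out the derivative bound and the evaluation at points of $U$ and $V$ a bit more explicitly than the paper does.
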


\begin{proof}
  Let $r>0$ be such that, for all $x, y \in U \cup V$ with $x \neq y$,
  we have $B_r(x) \cap B_r(y) = \varnothing$. Let
  $\varphi \in \Cc^{\infty}(\RR^d)$ be so that $\varphi(0) = 1$ together
  with $\supp (\varphi) \subseteq B^{}_r (0)$ and
  $\varphi(x) \in [\ts 0,1]$ for all $x$, which exists by standard
  arguments.

  Define $f = \sum_{u\in U} T^{}_u \varphi$, where
  $\bigl(T^{}_u \varphi \bigr) (x) = \varphi (x-u)$. Since
  $\supp(T^{}_u \varphi) \subseteq B_{r}(u)$, where the $B_{r}(u)$ are
  pairwise disjoint open sets, it is immediate that $f$ has the
  desired properties.
\end{proof}

Next, let us recall the following standard result, which we prove for
convenience.

\begin{fact}\label{fact:f-bound-fT-temp}
  Let\/ $f \in C^{\infty}(\RR^d)$ and\/ $T \in \cS'(\RR^d)$. If\/ $f$
  and all its derivatives are bounded, the mapping\/
  $\varphi \mapsto \bigl( fT \bigr)(\varphi) \defeq T(f \varphi)$
  defines a tempered distribution.
\end{fact}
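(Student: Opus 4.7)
The plan is to reduce the statement to the standard continuity criterion for tempered distributions: a linear map on $\cS(\RR^d)$ is in $\cS'(\RR^d)$ precisely when it is continuous with respect to the family of Schwartz seminorms $\| \cdot \|^{}_{\alpha,\beta}$ defined earlier in the paper. Since $T$ is already tempered, everything reduces to showing that multiplication by $f$ is a continuous linear map from $\cS(\RR^d)$ into itself, from which $\varphi \mapsto T(f\varphi)$ is continuous as a composition.

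First I would verify that $f\varphi \in \cS(\RR^d)$ for every $\varphi \in \cS(\RR^d)$ by applying the Leibniz rule
\[
   D^{\alpha}(f\varphi) \, = \sum_{\gamma \leqslant \alpha}
   \binom{\alpha}{\gamma} \, \bigl( D^{\gamma} f \bigr) \,
   \bigl( D^{\alpha-\gamma} \varphi \bigr) \ts .
\]
Multiplying by $x^{\beta}$ and taking the supremum over $x\in\RR^d$, the hypothesis that each $D^{\gamma} f$ is bounded, say by a constant $M^{}_{\gamma}$, gives the estimate
\[
   \| f\varphi \|^{}_{\alpha,\beta} \, \leqslant
   \sum_{\gamma \leqslant \alpha} \binom{\alpha}{\gamma} \,
   M^{}_{\gamma} \, \| \varphi \|^{}_{\alpha-\gamma, \beta} \ts .
\]
In particular, each Schwartz seminorm of $f\varphi$ is controlled by finitely many Schwartz seminorms of $\varphi$ times a constant depending only on $\alpha$, $\beta$, and $f$. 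This establishes both that $f\varphi\in\cS(\RR^d)$ and that the multiplication map $\varphi \mapsto f\varphi$ is continuous on $\cS(\RR^d)$.

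Next, since $T\in\cS'(\RR^d)$, there exist finitely many multi-indices and a constant $C$ with $\lvert T(\psi) \rvert \leqslant C \max \| \psi \|^{}_{\alpha,\beta}$ for $\psi\in\cS(\RR^d)$. Applying this to $\psi = f\varphi$ and combining with the previous estimate yields a bound of the form $\lvert T(f\varphi) \rvert \leqslant C' \max \| \varphi \|^{}_{\alpha',\beta'}$, where the maximum again runs over a finite collection of multi-indices. Linearity of $\varphi \mapsto T(f\varphi)$ is inherited from linearity of $T$ and of multiplication by $f$, so the map is a continuous linear functional on $\cS(\RR^d)$, hence tempered.

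There is no real obstacle here; the only point requiring mild care is the bookkeeping to ensure that Leibniz produces control by \emph{finitely} many Schwartz seminorms of $\varphi$, which it does because the sum over $\gamma \leqslant \alpha$ is finite and each $D^{\gamma} f$ contributes only a constant factor. The rest is a straightforward application of the standard seminorm characterisation of $\cS'(\RR^d)$.
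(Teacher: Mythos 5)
Your proposal is correct and takes essentially the same route as the paper: both apply the Leibniz rule to show that $\varphi \mapsto f\varphi$ is a continuous map of $\cS(\RR^d)$ into itself (with Schwartz seminorms of $f\varphi$ controlled by finitely many seminorms of $\varphi$), then observe that $fT = T \circ F$ is tempered as the composition of continuous maps. The only cosmetic difference is that you spell out the seminorm characterisation of $\cS'(\RR^d)$ in the final step, whereas the paper simply invokes continuity of the composition.
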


\begin{proof}
Define $F \colon \cS(\RR^d) \xrightarrow{\quad} \Cu(\RR^d)$ via
\[
   F(\varphi) \, \defeq \, f \varphi \,.
\]
Let $\alpha, \beta$ be arbitrary multi-indices, with ordering
defined componentwise, and set
\[
  \binom{\alpha}{\beta} \, = \, \prod_{i=1}^{d}
  \binom{\alpha^{}_i}{\beta^{}_i} \ts .
\]
Then, via the multivariate derivation formula of Leibniz, we have
\begin{align*}
    \big| x^{\beta} D^{\alpha}( f \varphi) \big| \, & = \,
    \biggl| \ts\ts x^{\beta}  \sum_{ \gamma \leqslant \alpha}
    \binom{\alpha}{\gamma} \, ( D^{\gamma} f ) \,
    ( D^{\alpha-\gamma}\varphi) \, \biggr|
    \, \leqslant  \ts \sum_{ \gamma \leqslant \alpha}
    \binom{\alpha}{\gamma}  \big| ( D^{\gamma} f ) \,
     x^{\beta} (D^{\alpha-\gamma}\varphi)\ts \big|  \\
  & \leqslant  \ts \sum_{ \gamma \leqslant \alpha}
  \binom{\alpha}{\gamma} \|  D^{\gamma} f
  \|^{}_\infty \,  \|\varphi \|^{}_{\beta, \alpha -\gamma}
\end{align*}
for every $x\in\RR^d$.  This shows that
$F\bigl( \cS(\RR^d) \bigr) \subseteq \cS(\RR^d)$ and that
$F\colon \cS(\RR^d) \xrightarrow{\quad} \cS(\RR^d)$ is continuous with
respect to the Schwartz topology. In particular,
$f\ts T =T {\ts\circ\ts} F \in \cS'(\RR^d)$.
\end{proof}

The equivalence of the key notions in this case can now be stated
as follows.

\begin{theorem}\label{thm:loc-fin-supp}
  Let\/ $\mu$ be a Radon measure on\/ $\RR^d$ with uniformly discrete
  support. If\/ $\mu$ is tempered, it is also slowly increasing.
\end{theorem}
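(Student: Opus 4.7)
The plan is to reduce to the case of a \emph{positive} measure with uniformly discrete support, where Theorem~\ref{thm:positive} closes the loop, and to use the separation Lemma~\ref{lem:help} together with Fact~\ref{fact:f-bound-fT-temp} as the mechanism for the reduction.

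First I would use Fact~\ref{fact:two}(1) to replace $\mu$ by its real and imaginary parts; since $\supp(\real\mu)$ and $\supp(\imag\mu)$ are subsets of $\supp(\mu)$, they remain uniformly discrete, so it suffices to treat the case of a real, signed Radon measure $\mu$ with uniformly discrete support. Because the support is uniformly discrete, $\mu$ has the explicit form $\mu = \sum_{x\in\supp(\mu)} c^{}_x \delta^{}_x$ with $c^{}_x \in \RR \setminus \{0\}$, and its Hahn--Jordan decomposition takes the transparent form $\mu = \mu^{}_{+} - \mu^{}_{-}$ with
\[
    \mu^{}_{+} \, = \sum_{c^{}_x > 0} c^{}_x \ts \delta^{}_x
    \quad \text{supported on } U \defeq \{x : c^{}_x > 0 \},
\]
and analogously $\mu^{}_{-}$ supported on $V \defeq \{x : c^{}_x < 0\}$. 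The sets $U$ and $V$ are disjoint, and $U\cup V \subseteq \supp(\mu)$ is uniformly discrete.

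Next, apply Lemma~\ref{lem:help} to these $U$ and $V$ to obtain $f\in C^{\infty}(\RR^d)$ with $f$ and all derivatives bounded, $f\equiv 1$ on $U$, and $f\equiv 0$ on $V$. Let $T\in\cS'(\RR^d)$ be a tempered distribution with $T(\varphi)=\mu(\varphi)$ for all $\varphi\in\Cc^{\infty}(\RR^d)$, which exists because $\mu$ is tempered. By Fact~\ref{fact:f-bound-fT-temp}, the product $fT$ is again a tempered distribution. For any $\varphi\in\Cc^{\infty}(\RR^d)$ one has $f\varphi\in\Cc^{\infty}(\RR^d)$, so
\[
    (fT)(\varphi) \, = \, T(f\varphi) \, = \, \mu(f\varphi)
    \, = \int_{\RR^d} f \varphi \dd \mu^{}_{+}
    \, - \int_{\RR^d} f \varphi \dd \mu^{}_{-}
    \, = \, \mu^{}_{+}(\varphi) \ts ,
\]
where the last equality uses $f\equiv 1$ on $\supp(\mu^{}_{+})$ and $f\equiv 0$ on $\supp(\mu^{}_{-})$. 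Hence the \emph{positive} Radon measure $\mu^{}_{+}$ is tempered. Repeating the argument with $1-f$ in place of $f$ shows that $\mu^{}_{-}$ is tempered as well.

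Finally, since $\mu^{}_{+}$ and $\mu^{}_{-}$ are positive tempered measures, Theorem~\ref{thm:positive} gives that both are slowly increasing, and Lemma~\ref{lem:linear} (applied in the easy direction) yields that $\mu = \mu^{}_{+} - \mu^{}_{-}$ is slowly increasing. Combined with the reduction to real and imaginary parts via Fact~\ref{fact:two}(2), this completes the proof. The one subtle point to verify carefully is the computation above showing $(fT)|^{}_{\Cc^{\infty}} = \mu^{}_{+}$, which hinges on $f\varphi$ being a valid test function for $T$; this is where uniform discreteness enters, as it is precisely what allows Lemma~\ref{lem:help} to produce the separating cut-off $f$ with bounded derivatives of all orders.
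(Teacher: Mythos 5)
Your proof is correct and follows essentially the same strategy as the paper: reduce to the real signed case via Fact~\ref{fact:two}, separate the positive and negative parts using the smooth cutoff $f$ from Lemma~\ref{lem:help}, invoke Fact~\ref{fact:f-bound-fT-temp} to see that $fT$ is tempered, and conclude via Theorem~\ref{thm:positive}. The only cosmetic difference is that the paper defines $\mu^{}_{\pm}$ directly as $f\mu$ and $(1-f)\mu$, whereas you first write down the Hahn--Jordan decomposition and then identify it with $f\mu$; since the support is uniformly discrete these agree, so the arguments coincide.
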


\begin{proof}
  Since $\mu$ is a tempered measure with uniformly discrete support,
  so are $\mbox{Re}(\mu)$ and $\mbox{Im}(\mu)$. If we show the latter
  to be slowly increasing, $\mu$ is slowly increasing by
  Facy~\ref{fact:two}. Thus, without loss of generality, we may assume
  $\mu$ to be a signed measure. Define
\[
    \vL^{}_{\pm} \, = \, \{ x \in \RR^d : \mu (\{ x \} )
    \,\gtrless \, 0 \} \ts .
\]
Then, the set $\vL = \{ x \in \RR^d : \mu (\{ x \} ) \ne 0 \}$, which
is the support of $\mu$ and uniformly discrete by assumption,
satisfies $\vL = \vL_+ \cup \vL_-$ together with
$\vL_+ \nts \cap \vL_- = \varnothing$.

If $\mu$ is tempered, there is a $T\in\cS' (\RR^d)$ such that
$\mu (\varphi) = T (\varphi)$ holds for all
$\varphi \in C^{\infty}_{\mathsf{c}} (\RR^d)$.  Let
$f \in C^{\infty} (\RR^d)$ be a function such that $f$ and all its
derivatives are bounded with $f |^{}_{\vL_{+}} \equiv 1$ and
$f |^{}_{\vL_{-}} \equiv 0$, which is guaranteed to exist by
Lemma~\ref{lem:help}, and set $g = 1-f$, so also
$g\in C^{\infty} (\RR^d)$ and $g$ and all its derivatives are bounded.

Setting $\mu^{}_{+} = f \cdot \mu$ and $\mu^{}_{-} = (-g)\cdot \mu$,
we get $\mu = \mu^{}_{+} \nts - \mu^{}_{-}$ where $\mu^{}_{+}$ and
$\mu^{}_{-}$ are positive Radon measures by construction. Further, for
all $\varphi\in C^{\infty}_{\mathsf{c}} (\RR^d)$, we have
\[
    \mu^{}_{+} (\varphi) \, = \, \bigl( f \mu \bigr) (\varphi) \, = \,
    \mu (f \varphi ) \, = \, T (f \varphi) \, = \, \bigl( f \ts T \bigr)
    (\varphi) \ts .
\]
Since $T\in \cS' (\RR^d)$ with $f\in C^{\infty} (\RR^d)$ and $f$ and
all its derivatives are bounded, we have $f\ts T \in \cS' (\RR^d)$ by
Fact~\ref{fact:f-bound-fT-temp}.  Therefore, $\mu^{}_{+}$ is a
positive, tempered measure, hence also slowly increasing.

In the same way, one gets
$\mu^{}_{-} (\varphi) = (g \ts T) (\varphi)$, hence $\mu^{}_{-}$ is
slowly increasing as well.
\end{proof}

Explicitly, we can summarise the situation as follows.

\begin{coro}
  Let\/ $\mu$ be a Radon measure on\/ $\RR^d$ with uniformly discrete
  support. Then, the following properties are equivalent:
\begin{enumerate}\itemsep=2pt
\item $\mu$ is slowly increasing;
\item $\mu$ is strongly tempered;
\item one has\/ $\lvert \psi \rvert \in L^{1} (\mu)$
  for all\/ $\psi \in \cS (\RR^d)$;
\item one has\/ $\psi \in L^{1} \bigl( \lvert \mu \rvert \bigr)$ for
  all\/ $\psi \in \cS (\RR^d)$ with\/ $\psi \geqslant 0$;
\item $\mu$ is tempered. \qed
\end{enumerate}
\end{coro}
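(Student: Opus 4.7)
The plan is to reduce the corollary almost entirely to results already established in the paper, because four of the five conditions were shown to be equivalent for an arbitrary Radon measure, and the uniform-discreteness hypothesis will only be needed for a single direction. First, I would invoke Theorem~\ref{thm:relations} to obtain, in full generality and without any support condition, the equivalences (1) $\Leftrightarrow$ (2) $\Leftrightarrow$ (3), interpreting $L^{1}(\mu)$ in (3) as $L^{1}(\lvert \mu \rvert)$ for complex Radon measures, which is precisely the condition (4) of Theorem~\ref{thm:relations}. The same theorem, together with Lemma~\ref{lem:implications}, also yields the one-way implication (2) $\Longrightarrow$ (5).

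Next I would deal with (3) $\Leftrightarrow$ (4). The direction (3) $\Longrightarrow$ (4) is trivial by specialising to non-negative Schwartz functions. For (4) $\Longrightarrow$ (1), I would apply Proposition~\ref{prop:L-one} to the positive Radon measure $\lvert \mu \rvert$: the hypothesis (4) asserts exactly that every non-negative $\psi \in \cS(\RR^d)$ belongs to $L^{1}(\lvert \mu \rvert)$, and the conclusion is that $\lvert \mu \rvert$ is slowly increasing, which by Definition~\ref{def:slow} is the same as $\mu$ being slowly increasing.

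The only remaining implication is (5) $\Longrightarrow$ (1), and this is the one place where the uniform-discreteness hypothesis is required. It is precisely the content of Theorem~\ref{thm:loc-fin-supp}, which handles the substantive step by splitting $\mu$ via Lemma~\ref{lem:help} into $\mu = \mu^{}_{+} \nts - \mu^{}_{-}$ with both parts positive and tempered (using Fact~\ref{fact:f-bound-fT-temp}), and then applying Theorem~\ref{thm:positive} to each summand. I do not expect any further obstacle beyond what is already absorbed into that theorem, and the uniform-discreteness assumption genuinely cannot be dropped: Proposition~\ref{prop:no-way} exhibits a signed tempered measure that fails to be slowly increasing, showing that this is the only nontrivial direction and justifying the support hypothesis in the corollary.
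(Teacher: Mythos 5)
Your proof is correct and matches the route the paper intends: the corollary is stated with $\qed$ precisely because it follows immediately by combining Theorem~\ref{thm:relations} (which gives $(1)\Leftrightarrow(2)\Leftrightarrow(3)\Leftrightarrow(4)$ and $(2)\Rightarrow(5)$) with Theorem~\ref{thm:loc-fin-supp} (which supplies $(5)\Rightarrow(1)$ using uniform discreteness). You correctly identify $(5)\Rightarrow(1)$ as the only place the support hypothesis enters, and your reading of condition (3) as referring to $L^{1}(\lvert\mu\rvert)$ is the right interpretation.
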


\begin{remark}
  If $U \cup V$ is locally finite, looking at the the proof of
  Lemma~\ref{lem:help}, we can still select radii $r_u>0$ for the
  points $u\in U$ such that $B_{r_u}(u) \cap (U \cup V)= \{ u\}$.
  Further, we can find functions
  $\varphi^{}_{u} \in C^{\infty}_{\mathsf{c}} (\RR^d)$ so that
  $\varphi^{}_{u} (u) = 1$ together with
  $\supp (\varphi^{}_{u}) \subseteq B^{}_r (u)$ and
  $\varphi^{}_{u} (x) \in [\ts 0,1]$ for all $x$. Then, via
  $f = \sum_{u\in U} \varphi^{}_{u}$, we get a function
  $f \in C^{\infty}(\RR^d)$ that is bounded.

  However, if $U \cup V$ is locally finite but \emph{not} uniformly
  discrete, the radii $r_u$ get arbitrarily close to zero. This forces
  the derivatives of $f$ to become unbounded. Consequently, in the
  proof of Theorem~\ref{thm:loc-fin-supp}, $fT$ is a distribution that
  need no longer be tempered. This shows that our proof of
  Theorem~\ref{thm:loc-fin-supp} cannot be extended to general
  measures with locally finite support.  In fact, we shall see in the
  next example that there exist tempered pure point measures with
  locally finite support that are not slowly increasing.  \exend
\end{remark}

Employing the construction of \cite{KS}, we now show that
Theorem~\ref{thm:loc-fin-supp} does not hold for measures with locally
finite support.

\begin{example}\label{ex:final}
  For distinct, positive numbers $a,b \in \RR$, consider
  $\mu^{}_{a,b} \defeq \delta^{}_0 +\delta^{}_a+\delta^{}_b -
  \delta^{}_{a+b}\ts $. Then, as observed in \cite{KS}, we have
  $\| \mu^{}_{a,b} \| =4$ and
\begin{equation}\label{eq:KS-ineq}
  \| \widehat{\mu^{}_{a,b}} \| \, \leqslant \, 2 \sqrt{2} \ts ,
\end{equation}
because a simple calculation with $z=\ee^{-2\pi\ii \ts a \cdot x}$ and
$w=\ee^{-2\pi\ii \ts b \cdot x}$ shows that
\[
    \| \widehat{\mu^{}_{a,b}} \|^2 \, = \,
    (1+z+w-zw)(1+\overline{z}+\overline{w}-\overline{zw})
    \, = \, 4-\overline{zw}+z\overline{w}+w\overline{z} -zw \ts ,
\]
which is a positive number, so we get
$\| \widehat{\mu^{}_{a,b}} \|^2 \leqslant 8$ via the triangle
inequality.

Next, for each $n$, select numbers
$a^{}_{1}, \ldots, a^{}_{n}, b^{}_1, \ldots, b^{}_n \in (0,1]$ that
are linearly independent over $\QQ$. Then, the elements
$k^{}_1 a^{}_1 + \ldots + k^{}_n a^{}_n +\ell^{}_1 b^{}_1 + \ldots +
\ell^{}_n b^{}_n$ are distinct for all $2^{2n}$ choices of
$k^{}_1, \ldots, k^{}_n$ and $\ell^{}_1 , \ldots, \ell^{}_n $ in
$ \{ 0,1 \}$.

Now, consider
\[
   \nu^{}_n \, \defeq \, \Conv_{i=1}^n \mu^{}_{a^{}_i , b^{}_i} \ts .
\]
A simple computation shows that $\nu^{}_{n}$ has the form
\begin{equation}\label{eq:KS-example}
  \nu^{}_n \, =  \sum_{k^{}_1 , \ldots, k^{}_n , \ell^{}_1 ,
    \ldots, \ell^{}_n \in \{ 0,1 \}}
  s \ts ( k^{}_1 , \ldots, k^{}_n , \ell^{}_1 , \ldots, \ell^{}_n )
  \, \delta^{}_{ k_1 a_1+ \ldots + k_n a_n + \ell_1 b_1+ \ldots + \ell_n b_n }
\end{equation}
with
\[
  s \ts ( k^{}_1 , \ldots, k^{}_n , \ell^{}_1 ,
  \ldots, \ell^{}_n ) \, = \, (-1)^{ \card \{ i :
    1 \leqslant i \leqslant n , k_i = \ell_i = 1 \} }
  \, = \, \pm 1 \ts .
\]
Since the Dirac measures on the RHS of \eqref{eq:KS-example} have
pairwise disjoint supports, we get
\[
  \| \nu^{}_n \| \, =  \sum_{k^{}_1 , \ldots, k^{}_n ,
    \ell^{}_1 , \ldots, \ell^{}_n \in \{ 0,1 \}}
  \lvert s \ts ( k^{}_1 , \ldots, k^{}_n , \ell^{}_1 ,
  \ldots, \ell^{}_n ) \rvert \, = \, 2^{2n} .
\]
Moreover, Eq.~\eqref{eq:KS-ineq} implies
\[
  \| \widehat{\nu^{}_{n}} \|^{}_\infty \, = \,
  \Bigl\| \prod_{i=1}^{n} \widehat{\mu^{}_{a^{}_i , b^{}_i }}
  \Bigr\|_\infty \, \leqslant \, 2^{\frac{3 n}{2}}_{\vphantom{I}} .
\]

Now, for each $m \in \NN$, pick some $n$ such that
$2^{\frac{n}{2}}_{\vphantom{I}} \geqslant 2^m(m^2+1)^m$, and consider
\[
  \omega^{}_{m} \, \defeq \, \frac{\nu^{}_{n}}
  {2^m_{\vphantom{I}} \ts \| \widehat{\nu_n}\|^{}_{\infty}}  \ts .
\]
Then, we get
\[
  \|\ts \omega^{}_{m} \| \, = \,
  \frac{\| \nu^{}_{n} \|}{2^{m}_{\vphantom{I}} \|
    \ts \widehat{\nu_n}\|^{}_{\infty}}
  \, \geqslant \, 2^{\frac{n}{2} - m}_{\vphantom{I}}
  \, \geqslant \, (m^2+1)^{m}
  \qquad \text{and} \qquad
  \| \ts \widehat{\omega^{}_{m}} \|^{}_\infty \, = \, 2^{-m} .
\]
Further, by construction,
$\supp(\omega^{}_m) \subseteq [0,2] \subseteq [-2,2]$. Therefore, by
Lemma~\ref{lem:temp-not-slw}, the measure
\[
   \mu \, =
   \sum_{m=1}^{\infty} \delta^{}_{8 m } \nts* \omega^{}_m
\]
is tempered, but not slowly increasing. Moreover, since each
$\omega^{}_m$ has finite support, $\mu$ has locally finite support by
Remark~\ref{rem:supp-mu}.
\end{example}

\section*{Acknowledgements}

We are grateful to Timo Spindeler for valuable discussions and helpful
comments on the manuscript. We thank an anonymous referee for several
helpful comments.  This work was supported by the German Research
Foundation (DFG, Deutsche Forschungsgemeinschaft), within the CRC
1283/2 \mbox{(2021 - 317210226)} at Bielefeld University (MB) and by
the Natural Sciences and Engineering Council of Canada (NSERC), via
grant 2020-00038 (NS).

\end{document}